\newtheorem{thm}{Theorem}[section]
\newtheorem{cor}[thm]{Corollary}
\newtheorem{lem}[thm]{Lemma}
\newtheorem{prop}[thm]{Proposition}
\newtheorem{rem}[thm]{\bf{Remark}}
\newtheorem{df}{Definition}[section]
\numberwithin{equation}{section}
\begin{document}

\leftline{ \scriptsize}

\vspace{1.3 cm}
\title
{ Symmetry classes of tensors associated with the Semi-Dihedral groups $SD_{8n}$}
\author{M. Hormozi}
\author{K. Rodtes}
\thanks{{\scriptsize
\hskip -0.4 true cm MSC(2000):Primary   20C30; Secondary 15A69
\newline Keywords: Symmetry classes of tensors , Orthogonal basis , Semi-dihedral groups
}}
\hskip -0.4 true cm

\maketitle



\begin{abstract} We discuss the existence of an orthogonal basis consisting of decomposable vectors for some symmetry classes of tensors associated with Semi-Dihedral groups $SD_{8n}$. The dimensions of these symmetry classes of tensors are also computed.

\end{abstract}

\vskip 0.2 true cm


\pagestyle{myheadings}
\markboth{\rightline {\scriptsize  Mahdi Hormozi and Kijti Rodtes}}
         {\leftline{\scriptsize }}
\bigskip
\bigskip


\vskip 0.4 true cm
\section{Introduction }
\label{}

Let $V$ be an $n$-dimensional complex inner product space and $G$ be a
permutation group on $m$ elements. Let $\chi$ be any irreducible character of
$G$. For any $\sigma \in G$, define the operator
$$
P_\sigma:  \bigotimes_{1}^{m} V \rightarrow   \bigotimes_{1}^{m} V
$$
by
\begin{equation}
  \label{e10}
 P_\sigma (v_1\otimes...\otimes v_m )= (v_{\sigma^{-1}(1)}\otimes...\otimes v_{\sigma^{-1}(m)} ).
\end{equation}

The symmetry classes of tensors associated with $G$ and $\chi$ is the image
of the symmetry operator
\begin{equation}
\label{e11}
T(G,\chi)= \frac{\chi(1)}{|G|}\sum_{\sigma \in G}\chi(\sigma)P_\sigma,
\end{equation}
and it is denoted by $V^{n}_{\chi}(G)$. We say that the tensor $T(G,\chi)(v_1\otimes\dots\otimes v_m )$ is a decomposable symmetrized tensor, and we denote it by $v_1\ast\dots\ast v_m$. We call $V^{n}_{\chi}(G)$ the symmetry class of tensors associated with $G$ and $\chi$, and the dimension of  $V^{n}_{\chi}(G)$ is
\begin{equation}
\label{important1}
\dim V^{n}_{\chi}(G)= \frac{\chi(1)}{|G|}\sum_{\sigma \in G}\chi(\sigma)n^{c(\sigma)},
\end{equation}
where $c(\sigma)$ is the number of cycles, including cycles of length one, in the disjoint cycle factorization of $\sigma$  \cite {Ma}. \\

The inner product on $V$ induces an inner product on $V_\chi (G)$ which satisfies
$$
\langle v_1\ast\dots\ast v_m,u_1\ast\dots\ast u_m  \rangle= \frac{\chi(1)}{|G|}\sum_{\sigma \in G}\chi(\sigma)\prod_{i=1}^{m}\langle v_{i},u_{\sigma(i)} \rangle.
$$
Let $\Gamma^{m}_n $ be the set of all sequences $\alpha = (\alpha_1,...,\alpha_m)$, with $1  \leq \alpha_i\leq n$. Define the action of $G$ on $\Gamma^{m}_n $ by
$$
\sigma.\alpha =(\alpha_{\sigma^{-1}(1)},..., \alpha_{\sigma^{-1}(m)} ).
$$

Let $O(\alpha)=\{\sigma.\alpha|\sigma\in G \}$ be the \textit{orbit} of $\alpha$. We write $\alpha \sim \beta $ if $\alpha$ and $\beta$  belong to the same orbit in $\Gamma^{m}_n$. Let $ \Delta$ be a
system of distinct representatives of the orbits. We denoted by $G_\alpha$ the \textit{stabilizer subgroup} of $\alpha$, i.e., $G_\alpha=\{\sigma\in G|\sigma.\alpha=\alpha \}$.  Define
$$
\Omega = \{\alpha \in \Gamma^{m}_n | \sum_{\sigma \in G_\alpha}\chi(\sigma) \neq 0   \},
$$
and put $\overline{\Delta} = \Delta \cap \Omega$.\\

Let $\{e_1, . . . , e_n \}$ be an orthonormal basis of $V$. Now let us denote by $e^{*}_{\alpha} $ the tensor $e_{\alpha_{1}}\ast\cdots\ast e_{\alpha_{m}}$. We have
$$
 \langle e^{*}_\alpha,e^{*}_\beta \rangle = \left\{\begin{array}{ll} 0 \hspace{3.8cm} \text{if} \hspace{0.5cm} \alpha \nsim \beta  \\
\frac{\chi(1)}{|G|}\sum_{\sigma \in G_\beta}\chi(\sigma h^{-1})\hspace{0.5cm} \text{ if} \hspace{0.5cm} \alpha =h.\beta.
\end{array}\right.
$$
In particular, for $\sigma_1, \sigma_2 \in G $ and 	$\gamma \in \overline{\Delta }$ we obtain\\
\begin{equation}
\label{important2}
\langle e^{*}_{\sigma_1.\gamma},e^{*}_{\sigma_2.\gamma}\rangle = \frac{\chi(1)}{|G|}\sum_{x \in \sigma_2 G_\gamma \sigma^{-1}_1}\chi(x).
\end{equation}
Moreover,  $e^{*}_{\alpha}\neq 0$ if and only if $\alpha \in \Omega$.\\

For $\alpha \in \overline{\Delta} $, $ V^{*}_\alpha =\langle e^{*}_{\sigma.\alpha}: \sigma \in G  \rangle $ is called the orbital subspace of $ V_\chi(G).$ It follows that
$$
V_\chi(G)= \bigoplus_{\alpha \in \overline{\Delta}} V^{*}_\alpha
$$
is an orthogonal direct sum. In [9] it is proved that
\begin{equation}
\label{important3}
\dim V^{*}_\alpha ~=~ \frac{\chi(1)}{|G_\alpha|}\sum_{\sigma \in G_\alpha}\chi(\sigma).
\end{equation}
Thus we deduce that if $\chi$ is a linear character, then $\dim V^{*}_\alpha =1$ and in this case the set
$$
\{e^{*}_{\alpha} | \alpha \in \overline{\Delta}   \}
$$
is an orthogonal basis of $V_\chi(G)$. A basis which consists of the decomposable symmetrized tensors $e^{*}_{\alpha}$ is called an orthogonal $\ast$-basis. If $\chi$ is not linear, it is possible that $V_\chi(G)$  has no orthogonal $\ast$-basis. The reader can find further information about the symmetry classes of tensors in  [1-8], [11-15] and [17]. In this paper we discuss the existence of an orthogonal basis consisting of decomposable vectors for some symmetry classes of tensors associated with semi-dihedral groups $SD_{8n}$. Also we compute the dimensions of these symmetry classes of tensors.

For the next section, we investigate the dimensions of symmetry classes of tensors associated with the  semi-dihedral groups of order $8n$, $SD_{8n}$, (Theorem \ref{thmdim1}, \ref{thmdim 2}) by using (\ref{important1}).  To do that, the character tables for $SD_{8n}$ and the number of cycles in the factorization of each element in $SD_{8n}$ are the main ingredients.  We calculate the explicit conjugacy classes for $SD_{8n}$ (Proposition \ref{conjuagacy}) at the first step and then the character tables are obtained separately for even $n$ (\textbf{Table I}) and odd $n$ (\textbf{Table II}). The embedding of $SD_{8n}$ to symmetric group $S_{4n}$ is explicit in Proposition \ref{embedding} and the number of cycles  have been calculated in Proposition \ref{c(g)}.

\section{\bf Semi-Dihedral groups $SD_{8n}$ }
The presentation for $SD_{8n}$ for $n\geq2$ is given by
$  SD_{8n}=<a,b \mid a^{4n}=b^{2}=1,bab=a^{2n-1}>.$
All $8n$ elements of $SD_{8n}$ may be given by
$$SD_{8n}=\{  1,a,a^{2},...,a^{4n-1},b,ba,ba^{2},...,ba^{4n-1}\}.$$
\begin{lem}\label{formula}  For $SD_{8n}$, we have the relations
\begin{enumerate}
  \item $ba^{k}=a^{(2n-1)k}b$,
  \item $a^{k}b=ba^{(2n-1)k}$,
    \item $a^{-k}=a^{4n-k}$, $a^{k}=a^{4n+k}$, $b=b^{-1}$,
  \item $(ba^{k})^{-1}=ba^{(2n+1)k}$.
\end{enumerate}
\end{lem}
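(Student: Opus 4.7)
The proof plan is a direct computation from the three defining relations $a^{4n}=1$, $b^2=1$, and $bab=a^{2n-1}$. The single arithmetic observation that unlocks everything is the congruence $(2n-1)^2 = 4n^2-4n+1 \equiv 1 \pmod{4n}$, so that conjugation by $b$ is an involution of the cyclic subgroup $\langle a\rangle$. I would prove the four items in the order stated, since each relies on the previous.

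First, for item (1), I would convert the defining relation $bab=a^{2n-1}$ into the commutation rule $ba=a^{2n-1}b$ by right-multiplying by $b$ and using $b^2=1$. Then a straightforward induction on $k\ge 0$ gives
\[
 ba^{k} = (ba)a^{k-1} = a^{2n-1}ba^{k-1} = a^{2n-1}\cdot a^{(2n-1)(k-1)}b = a^{(2n-1)k}b,
\]
and the case of negative $k$ follows from item (3) (which is purely $a^{4n}=1$ and $b^2=1$) together with inverting the positive-$k$ identity.

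For item (2), I would simply apply item (1) with exponent $(2n-1)k$ in place of $k$:
\[
 ba^{(2n-1)k} = a^{(2n-1)^{2}k}\,b = a^{k}\,b,
\]
where the last equality uses the key congruence $(2n-1)^{2}\equiv 1\pmod{4n}$. Item (3) is immediate from the orders of $a$ and $b$. Finally for item (4), I would expand
\[
 (ba^{k})^{-1} = a^{-k}b^{-1} = a^{-k}b
\]
and invoke item (2) with $-k$ in place of $k$ to get $a^{-k}b = ba^{-(2n-1)k}$. Then $-(2n-1)k + 4nk = (2n+1)k$ combined with $a^{4nk}=1$ gives $a^{-(2n-1)k}=a^{(2n+1)k}$, finishing the lemma.

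There is no real obstacle here; the only subtle point is remembering to reduce exponents of $a$ modulo $4n$ at each step, which is why it is cleanest to isolate the congruence $(2n-1)^{2}\equiv 1\pmod{4n}$ at the outset and use it as a lemma throughout. Everything else is bookkeeping with the two relations $ba=a^{2n-1}b$ and $a^{4n}=b^{2}=1$.
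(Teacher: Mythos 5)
Your proof is correct and follows the same route the paper intends: a direct computation from the defining relations $a^{4n}=b^{2}=1$ and $bab=a^{2n-1}$, which the paper dispatches with ``the results follow immediately.'' Your write-up usefully isolates the key congruence $(2n-1)^{2}\equiv 1 \pmod{4n}$ and supplies the induction the paper leaves implicit, but there is no difference in method.
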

\begin{proof} By relations $a^{4n}=b^{2}=1$ and $bab=a^{2n-1}$, the results follow immediately.
\end{proof}
By definition, for a group $G$ and $a\in G$, the conjugacy class of $a\in G$ is given by $$[a]=\{ gag^{-1}\mid g\in G\}.$$
Now, for $G=SD_{8n}$, we have  $[a^{r}]=\{ ga^{r}g^{-1}\mid g\in SD_{8n}\}$ for each $0\leq r\leq4n-1$.
If $g=a^{k}$ for $0\leq k\leq 4n-1$, then $ga^{r}g^{-1}=a^{k}a^{r}a^{-k}=a^{k+r-k}=a^{r}$ and hence $a^{k}$ does not give any new element for $[a^{r}]$.  But if $g=ba^{k}$ for $0\leq k\leq 4n-1$, then (by Lemma \ref{formula}),
\begin{equation*}
    ga^{r}g^{-1}=(ba^{k})a^{r}(ba^{k})^{-1}=ba^{r}b=a^{(2n-1)r}
\end{equation*}
and hence $a^{(2n-1)r} \in[a^{r}]$.  Therefore, for each $r=0,1,2,...,4n-1$,
\begin{equation}\label{class am}
    [a^{r}]=\{ a^{r},a^{(2n-1)r} \}.
\end{equation}
For the conjugacy class of $[ba^{r}]=\{ g(ba^{r})g^{-1}\mid g\in SD_{8n}\}$, for each  $0\leq r\leq4n-1$, we first consider elements $a^{k}$ for each $0\leq k \leq 4n-1$.  We see that
$a^{k}(ba^{r})a^{-k}=(a^{k}b)a^{r-k}=ba^{(2n-1)k}a^{r-k}=ba^{(2n-2)k+r}$.  For elements $ba^{k}$, where $0\leq k\leq4n-1$, we see that
\begin{equation*}
        \begin{array}{lclclcl}
       ba^{k}(ba^{r})(ba^{k})^{-1} & = & ba^{r-k}b & = & ba^{k+(2n-1)(r-k)}
        & = & ba^{(2n+2)k+(2n-1)r}.
       \end{array}
\end{equation*}
Therefore, for each $r=0,1,2,...,4n-1$,
\begin{equation}\label{bam}
    [ba^{r}]=\{ba^{(2n-2)k+r},ba^{(2n+2)k+(2n-1)r} \mid k=0,1,2,...,4n-1 \}.
\end{equation}

For fixed $r,k_{1} \in \{ 0,1,2,...,4n-1\} $, we see that $$(2n-2)k_{1}+r\equiv [(2n+2)k+(2n-1)r] \hbox{ mod } 4n$$ has a solution $k \in \{0,1,2,...,4n-1\}$ if and only if the linear Diophantine equation $$(n+1)k+2nt=(n-1)(k_{1}-r)$$ has a solution.  This always occurs since the greatest common divisor $d$ of $n+1$ and $2n$ is $1$ or $2$.  If $d=2$, then $n$ must be odd and hence $d \mid (n-1)(k_{1}-r)$.  Thus $$\{ba^{(2n-2)k+r}\mid k=0,1,2,...,4n-1 \} = \{ba^{(2n+2)k+(2n-1)r} \mid k=0,1,2,...,4n-1 \}.$$  Hence, by (\ref{bam}),
\begin{equation}\label{bam2}
    [ba^{r}]=\{ba^{(2n-2)k+r} \mid k=0,1,2,...,4n-1 \}.
\end{equation}
\begin{df} Define $C^{even}:=C_{1}\cup C^{even}_{2} \cup C^{even}_{3}$ and $C^{odd}:=C_{1}\cup C^{odd}_{2} \cup C^{odd}_{3}$, where $C_{1}:=\{0,2,4,...,2n \}$, $C^{even}_{2}:=\{1,3,5,...,n-1 \}$, $C^{even}_{3}:=\{2n+1,2n+3,2n+5,...,3n-1 \}$ and  $C^{odd}_{2}:=\{1,3,5,...,n \}$, $C^{odd}_{3}:=\{2n+1,2n+3,2n+5,...,3n \}$. Also, define $C^{\dag}_{even}:=C_{1}\setminus\{ 0,2n\}$, $C^{\dag}_{odd}:=C^{even}_{2}\cup C^{even}_{3}$, $C^{odd}_{2,3}:=C^{odd}_{2}\cup C^{odd}_{3}$ and $C_{*}^{even}:=C^{even}\setminus\{0,2n\}$, $C_{*}^{odd}:=C^{odd}\setminus\{0,n,2n,3n\}$.
\end{df}
By (\ref{class am}) and (\ref{bam2}), we obtain the following result.
\begin{prop}\label{conjuagacy} The conjugacy classes of $SD_{8n}$, $n\geq2$, are as follows:
\begin{itemize}
  \item If $n$ is even, there are $2n+3$ conjugacy classes. Precisely,
       \begin{itemize}
         \item $2$ classes of size one being $[1]=\{ 1\}$ and $[a^{2n}]=\{ a^{2n}\}$,
         \item $2n-1$ classes of size two being $[a^{r}]=\{ a^{r},a^{(2n-1)r}\}, $ where $r \in C_{*}^{even}$ and
         \item $2$ classes of size $2n$ being $[b]=\{ba^{2t} \mid t=0,1,2,...,2n-1\}$ and $[ba]=\{ba^{2t+1} \mid t=0,1,2,...,2n-1\}$.
       \end{itemize}
  \item If $n$ is odd, there are $2n+6$ conjugacy classes.  Precisely,
   \begin{itemize}
     \item $4$ classes of size one being $[1]=\{ 1\}$, $[a^{n}]=\{ a^{n}\}$, $[a^{2n}]=\{ a^{2n}\}$ and $[a^{3n}]=\{ a^{3n}\}$,
     \item $2n-2$ classes of size two being $[a^{r}]=\{ a^{r},a^{(2n-1)r}\}$,  where $r \in C_{*}^{odd}$ and
     \item $4$ classes of size $n$ being $[b]=\{ba^{4t} \mid t=0,1,2,...,n-1\}$, $[ba]=\{ba^{4t+1} \mid t=0,1,2,...,n-1\}$, $[ba^{2}]=\{ba^{4t+2} \mid t=0,1,2,...,n-1\}$ and $[ba^{3}]=\{ba^{4t+3} \mid t=0,1,2,...,n-1\}$.
   \end{itemize}
\end{itemize}
\end{prop}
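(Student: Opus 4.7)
The plan is to leverage the explicit descriptions (\ref{class am}) and (\ref{bam2}) already derived, which reduce the problem to counting orbits inside two disjoint subsets of $SD_{8n}$: the powers $a^{r}$ and the reflection-type elements $ba^{r}$.

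First, I would determine the singleton classes of the form $[a^{r}]$. By (\ref{class am}), $[a^{r}]$ is a singleton iff $(2n-1)r\equiv r \pmod{4n}$, i.e., $(n-1)r\equiv 0 \pmod{2n}$. A short computation gives $\gcd(n-1,2n)=1$ for $n$ even (so $r\in\{0,2n\}$) and $\gcd(n-1,2n)=2$ for $n$ odd (so $r\in\{0,n,2n,3n\}$), matching the four-singleton versus two-singleton dichotomy. For the remaining indices the pair $\{r,(2n-1)r \bmod 4n\}$ has two distinct elements, so one chooses canonical representatives: a direct reduction yields $(2n-1)r\equiv -r\pmod{4n}$ when $r$ is even and $(2n-1)r\equiv 2n-r\pmod{4n}$ when $r$ is odd, so picking the smaller member of each pair from $\{0,1,\ldots,4n-1\}$ produces exactly $C_{*}^{even}$ in the even case and $C_{*}^{odd}$ in the odd case, accounting for the claimed $2n-1$ and $2n-2$ pair-classes respectively.

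Next, for the $[ba^{r}]$ classes, (\ref{bam2}) expresses each such class as $\{ba^{(2n-2)k+r}\mid k\in\mathbb{Z}\}$, whose cardinality equals $4n/\gcd(2n-2,4n)$. Since $\gcd(2n-2,4n)=2\gcd(n-1,2n)$, the same gcd values as above give class sizes $2n$ for $n$ even and $n$ for $n$ odd. Two such classes $[ba^{r_{1}}]$ and $[ba^{r_{2}}]$ coincide iff $r_{1}\equiv r_{2} \pmod{\gcd(2n-2,4n)}$, producing $2$ distinct classes with representatives $b,ba$ for even $n$ and $4$ distinct classes with representatives $b,ba,ba^{2},ba^{3}$ for odd $n$. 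A final tally gives $2+(2n-1)+2=2n+3$ classes in the even case and $4+(2n-2)+4=2n+6$ in the odd case, confirming the totals.

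The only real obstacle is the bookkeeping in the first step: one has to verify that the explicitly defined sets $C_{*}^{even}$ and $C_{*}^{odd}$ really do pick exactly one element from each nontrivial pair, which requires a short case split on the parity of $r$ and on whether $r$ lies below or above $2n$. Everything else reduces to the two gcd computations $\gcd(n-1,2n)\in\{1,2\}$ together with the standard count of residues in an arithmetic progression modulo $4n$.
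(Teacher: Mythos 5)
Your proposal is correct and follows essentially the same route as the paper: both arguments start from the already-established descriptions (\ref{class am}) and (\ref{bam2}), isolate the singleton classes $[a^{r}]$ via the congruence $(n-1)r\equiv 0 \pmod{2n}$ (the paper's facts (1)--(2), which your computation $\gcd(n-1,2n)\in\{1,2\}$ justifies), pick representatives from the pairs using $(2n-1)r\equiv -r$ or $2n-r \pmod{4n}$ according to the parity of $r$, and settle the $[ba^{r}]$ classes by the same gcd. The only cosmetic difference is that you package the paper's Diophantine-solvability argument for the $[ba^{r}]$ classes as a coset count modulo $\gcd(2n-2,4n)$, which is an equivalent reformulation.
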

\begin{proof} We first consider conjugacy classes of the form $[a^{r}]$.  By (\ref{class am}), it suffices to find the classes of size one to separate classes of size one or two.  Namely, we need to count $r \in \{0,1,2,...,4n-1 \}$ such that $r \equiv (2n-1)r \hbox{ mod }4n$.  This follows from the facts that
\begin{enumerate}
\item If $n$ is even, then $r \equiv (2n-1)r \hbox{ mod }4n$ if and only $r=0$ or $r=2n$,
\item If $n$ is odd, then $r \equiv (2n-1)r \hbox{ mod }4n$ if and only $r=0, r=n, r=2n$ or $r=3n$,
 \end{enumerate}
which are easy to prove.  To see that the conjugacy classes $[a^{r}]$'s are all different for $r \in C^{even}$ in case of even $n$  and for $r \in C^{odd}$ in case of odd $n$, it suffices to use the relations
\begin{enumerate}
  \item $(2n-1)r\equiv (4n-r)\hbox{ mod }4n$ if $r$ is even,
  \item $(2n-1)r\equiv (2n-r)\hbox{ mod }4n$  if $r$ is odd,
  \item $(2n-1)(2n+k)\equiv (4n-k)\hbox{ mod }4n$  if $k$ is odd,
\end{enumerate}
which are easy to prove.  These relations show that if $n$ is even and $r_{1},r_{2} \in C^{even}$, then $[a^{r_{1}}]\neq [a^{r_{2}}]$ if and only if $r_{1}\neq r_{2}$.  They also show that if $n$ is odd and $r_{1},r_{2} \in C^{odd}$, then $[a^{r_{1}}]\neq [a^{r_{2}}]$ if and only if $r_{1}\neq r_{2}$. \\

Next, we consider conjugacy classes of the form $[ba^{r}]$ separately in the even case and odd case.  For even $n$ and $\epsilon=0,1$, to show that $\{ba^{(2n-2)k+\epsilon} \mid k=0,1,2,...,4n-1 \}=\{ba^{2t+\epsilon} \mid t=0,1,2,...,2n-1\}$, it is enough to show that for each $t \in \{0,1,2,..., 2n-1\}$, there is $k \in \{0,1,2,...,4n-1 \}$ such that $[(2n-2)k+\epsilon]\equiv [2t+\epsilon] \hbox{ mod } 4n$.  This is equivalent to checking that the linear Diophantine equation $(n-1)k+2ns=t$ has a solution.  This is obvious because $\gcd(n-1,2n)=1$ (since $n$ is even) always divides $t$. \\

 For odd $n=2n_{0}-1$ and $\varepsilon=0,1,2,3$, to show that $\{ba^{(2n-2)k+\varepsilon} \mid k=0,1,2,...,4n-1 \}=\{ba^{4t+\epsilon} \mid t=0,1,2,...,n-1\}$, it is enough to show that for each $t \in \{0,1,2,..., n-1\}$, there is $k \in \{0,1,2,...,4n-1 \}$ such that $[(2n-2)k+\varepsilon]\equiv [4t+\varepsilon] \hbox{ mod } 4n$.  This is equivalent to checking that the linear Diophantine equation $(n_{0}-1)k+ns=t$ has a solution.  This is obvious because the $\gcd(n_{0}-1,n)=1$ (since $n=2n_{0}-1$) always divides $t$.
\end{proof}

To find the character table for $SD_{8n}$, we first recollect the main results for computing character table from \cite{JS}.
\begin{prop}\label{prop character} \cite{JS}
Let $V$ be a complex vector space of dimension $n$ and $G$ be a finite group.
If $\chi$ is the character of a representation $\rho$ ($\rho:G\rightarrow GL(V)$) of degree n i.e. $\chi_{\rho}(s)=Tr(\rho(s))$ for each $s\in G$, we have:
\begin{description}
  \item[$(1)$] $\chi(1)=n$, degree of $\rho$.
  \item[$(2)$] $\chi(s^{-1})= \overline{\chi(s)}$, conjugate of complex number, for all $s\in G$.
  \item[$(3)$] $\chi(tst^{-1})= \chi(s)$ for all $s\in G$.
  \item[$(4)$] If $\phi$ is the character of a representation $V$, then $(\phi,\phi)$ is a positive integer and we have
       $(\phi,\phi)=1 $ if and only if $V$ is irreducible, where
       $$(\phi,\phi)=\frac{1}{|G|}\sum_{s\in G}\phi(s)\overline{\phi(s)}. $$
  \item[$(5)$] Two representations with the same character are isomorphic.\\
   ( \textbf{Note:} $ \rho\cong\rho'\Leftrightarrow TR_{s}=R'_{s}T$ for some invertible matrix $T$ and for all $s\in G$, where $R_{s}$ and $R'_{s}$ are the representation matrixes of $\rho(s)$ and $\rho'(s)$ respectively.)
  \item[$(6)$] The number of irreducible representations of $G$ (up to isomorphism) is equal to the number of conjugacy classes of $G$.
  \item[$(7)$] The degree of the irreducible representation of $G$ divide the order of $G$.  Furthermore, it also divides $ (G:C)$ where $C$ is the centre of $G$.
  \item[$(8)$] If the irreducible characters of $G$ are $\chi_{1},\chi_{2},...,\chi_{h}$ then $|G|=\sum_{i = 1}^h n_{i}^2 $ where $n_{i}=\chi_{i}(1)$ and if $s\in G$ is different from 1, then we have $\sum_{i = 1}^h n_{i}\chi_{i}(s)=0$.
\end{description}
\end{prop}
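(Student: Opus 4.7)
The plan is to treat Proposition~\ref{prop character} as a compilation of classical facts from finite-group representation theory, all proved in \cite{JS}; the task is thus to indicate the key mechanism behind each clause rather than produce a fresh argument. Since the statement bundles eight well-known properties together, I would organise the plan by difficulty, handling the three trace-level observations first, then the clutch of consequences of Schur orthogonality, and finally the algebraic-integer divisibility.

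First, items~(1)--(3) are essentially immediate from the definition $\chi(s)=\mathrm{Tr}(\rho(s))$. Clause~(1) follows because $\rho(1)$ is the identity on an $n$-dimensional space. For~(2), finiteness of $G$ forces $\rho(s)$ to have finite order, so its eigenvalues are roots of unity; then $\chi(s^{-1})=\sum_{i}\lambda_{i}^{-1}=\sum_{i}\overline{\lambda_{i}}=\overline{\chi(s)}$. Clause~(3) holds because trace is invariant under conjugation. No representation-theoretic machinery is needed beyond basic linear algebra.

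Items~(4)--(6) and~(8) all rest on Schur's lemma and the resulting orthogonality relations, together with the fact that the irreducible characters form an orthonormal basis for the space of complex class functions on $G$. After establishing Schur's lemma and deriving row orthogonality, (4) drops out: decomposing $\phi=\sum_{i}m_{i}\chi_{i}$ gives $(\phi,\phi)=\sum_{i}m_{i}^{2}$, which equals $1$ precisely when one $m_{i}$ equals $1$ and the rest vanish, i.e.\ when $V$ is irreducible. Clause~(5) is then the statement that the multiplicities are recovered from inner products with irreducible characters, and~(6) follows by matching the number of irreducibles with the dimension of the class-function space, which is the number of conjugacy classes. The two identities in~(8) are, respectively, the decomposition of the regular representation (giving $|G|=\sum n_{i}^{2}$) and column orthogonality at a non-identity element.

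The deepest clause is~(7), and I expect this to be the only genuinely non-routine step. The standard route is: show that for every irreducible character $\chi$ and every class $C$, the quantity $\frac{|C|\chi(c)}{\chi(1)}$ is an algebraic integer; combine this with $(\chi,\chi)=1$ to conclude that $\frac{|G|}{\chi(1)}$ is a rational algebraic integer, hence an integer. The sharper divisibility $\chi(1)\mid [G:Z]$ requires the tensor-power trick of considering $\rho^{\otimes k}$ on a quotient acted on trivially by $Z$, then letting $k\to\infty$. This algebraic-integer step is the one place where I would not simply reproduce a proof from first principles; the remaining seven clauses I would verify by invoking the orthogonality framework, after which I would cite~\cite{JS} and proceed to construct the character tables for $SD_{8n}$ used in the sequel.
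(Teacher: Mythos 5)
Your proposal is correct and matches the paper's treatment: the paper gives no proof of Proposition~\ref{prop character} at all, simply citing \cite{JS}, and your sketch accurately identifies the standard arguments (trace properties for (1)--(3), Schur orthogonality for (4)--(6) and (8), and the algebraic-integer/tensor-power argument for (7)) that Serre's text supplies. Nothing further is needed.
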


Since the numbers of conjugacy classes of $SD_{8n}$ are different for even $n$ ($2n+3$ classes) or odd $n$ ($2n+6$ classes), we consider the character tables separately.

We first consider linear representations for even $n$.  By the relations $a^{4n}=1$ and $b^{2}=1$ and Proposition \ref{prop character} (4), we see that $\chi_{0}, \chi_{1}, \chi_{2}, \chi_{3}$ defined by $\chi_{0}(a)=1$, $\chi_{0}(b)=1$, $\chi_{1}(a)=1$, $\chi_{1}(b)=-1$, $\chi_{2}(a)=-1$, $\chi_{2}(b)=1$, and $\chi_{3}(a)=-1$, $\chi_{3}(b)=-1$ are irreducible linear representations.  Note also for even $n$ that there are no linear representation $\chi$ such that $\chi(a)=i:=\sqrt{-1}$, since $\chi(a)$  must be equal to $\chi(a^{2n-1})$ (by Proposition \ref{prop character} (3)).

We can conclude from ($7$) and ($8$) in Proposition \ref{prop character} that the groups $SD_{8n}$, for even $n$, must contain $2n-1$ two dimensional irreducible representations.

Now, we define two dimensional representations, for each natural number $h$ and $\omega=e^{\frac{i\pi}{2n}}$;
\begin{equation}\label{two dim rep}
    \rho^{h}(a^{r})=\left(
                        \begin{array}{cc}
                       \omega^{hr} & 0 \\
                       0 & \omega^{(2n-1)hr}
                     \end{array}
    \right) \hbox{ and } \rho^{h}(ba^{r})=\left(\begin{array}{cc}
                                       0 & \omega^{(2n-1)hr} \\
                                       \omega^{hr} & 0
                                     \end{array}
    \right),
\end{equation}
for each $r \in \{1,2,...,4n \}$.  It is easy to check that $\rho^{h}$ is a representation for any $h$.  Since $\omega^{4n}=1$, it is easy to see (by using (5) in Proposition \ref{prop character}) that $\rho^{h}\cong\rho^{(2n-1)h}$.  Thus, it suffices to consider $h \in C^{even}$.
\begin{lem}\label{number of irr re}
For even $n$, all $\rho^{h}$, $h\in C_{*}^{even}$, are irreducible representations.
\end{lem}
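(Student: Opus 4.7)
The plan is to invoke the irreducibility criterion from Proposition \ref{prop character}(4): $\rho^{h}$ is irreducible if and only if $(\chi_{h},\chi_{h}) = 1$, where $\chi_{h}$ denotes the character of $\rho^{h}$. From the explicit matrices in (\ref{two dim rep}), the character is immediate: $\chi_{h}(a^{r}) = \omega^{hr} + \omega^{(2n-1)hr}$, while $\chi_{h}(ba^{r}) = 0$ since $\rho^{h}(ba^{r})$ is off-diagonal. Therefore only the cyclic part contributes, and
\[
(\chi_{h},\chi_{h}) \;=\; \frac{1}{8n}\sum_{g\in SD_{8n}}|\chi_{h}(g)|^{2} \;=\; \frac{1}{8n}\sum_{r=0}^{4n-1}|\chi_{h}(a^{r})|^{2}.
\]

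Next I would expand the squared modulus using $\overline{\omega^{k}}=\omega^{-k}$, obtaining
\[
|\chi_{h}(a^{r})|^{2} \;=\; \bigl(\omega^{hr}+\omega^{(2n-1)hr}\bigr)\bigl(\omega^{-hr}+\omega^{-(2n-1)hr}\bigr) \;=\; 2 + \omega^{(2n-2)hr} + \omega^{-(2n-2)hr},
\]
and apply the standard root-of-unity identity $\sum_{r=0}^{4n-1}\omega^{kr} = 4n$ when $\omega^{k}=1$ (equivalently $4n\mid k$) and $0$ otherwise. The constant term contributes $8n$, so it remains to check that both exponential sums vanish, which amounts to verifying $4n\nmid (2n-2)h$, that is, $2n\nmid (n-1)h$.

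The only delicate step — and the single place where both hypotheses (that $n$ is even and that $h\in C_{*}^{even}$) enter — is this non-divisibility. Since $n$ is even, $n-1$ is odd and coprime to $n$, so $\gcd(n-1,2n)=1$; hence $2n\mid (n-1)h$ reduces to $2n\mid h$. But by definition $C_{*}^{even}=C^{even}\setminus\{0,2n\}\subset\{1,2,\ldots,3n-1\}\setminus\{2n\}$, so no $h\in C_{*}^{even}$ is a nonzero multiple of $2n$. Consequently both exponential sums vanish, giving $(\chi_{h},\chi_{h}) = 8n/8n = 1$, and Proposition \ref{prop character}(4) yields the irreducibility of $\rho^{h}$. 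I expect the main obstacle to be purely bookkeeping rather than conceptual: one has to keep track of the various indexing ranges for $r$ and $h$ carefully enough to see that the parity hypothesis on $n$ is exactly what forces $\gcd(n-1,2n)=1$ and thereby eliminates any accidental coincidence with a multiple of $2n$ in $C_{*}^{even}$.
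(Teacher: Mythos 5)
Your proof is correct and follows essentially the same route as the paper: compute $(\chi_{h},\chi_{h})$ via Proposition \ref{prop character}(4), note the character vanishes off $\langle a \rangle$, and show the cross terms $\omega^{\pm(2n-2)hr}$ sum to zero precisely because $4n \nmid (2n-2)h$ for $h \in C_{*}^{even}$ when $n$ is even. The only cosmetic difference is that the paper evaluates the sum with a cosine-summation identity whereas you use the geometric sum over roots of unity, and your explicit reduction via $\gcd(n-1,2n)=1$ is, if anything, a cleaner justification of the paper's assertion that $h=0$ and $h=2n$ are the only degenerate values.
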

\begin{proof} Let $h\in C^{even}$.  We consider (by (5) in Proposition \ref{prop character}) the character of $\rho^{h}$, $\chi_{\rho^{h}}$,
\begin{eqnarray*}
  (\chi_{\rho^{h}},\chi_{\rho^{h}}) &=& \frac{1}{8n}\sum_{x\in SD_{8n}}\chi_{\rho^{h}}(x)\overline{\chi_{\rho^{h}}(x)} \\
   &=& \frac{1}{8n}\sum_{r=1}^{4n}\chi_{\rho^{h}}(a^{r})\overline{\chi_{\rho^{h}}(a^{r})}, \hbox{(since $\chi_{\rho^{h}}(ba^{r})=0, \forall r\in\{1,2,3,...,4n\}$)} \\
   &=& \frac{1}{8n}\sum_{r=1}^{4n} (w^{hr}+w^{(2n-1)hr})\overline{(w^{hr}+w^{(2n-1)hr})} \\
   &=& \frac{1}{8n}\sum_{r=1}^{4n} [2+(w^{(2n-2)hr}+w^{-(2n-2)hr})].
\end{eqnarray*}
Thus, $$(\chi_{\rho^{h}},\chi_{\rho^{h}}) = \frac{1}{8n}\sum_{r=1}^{4n} \left[2+2\cos\frac{(2n-2)hr\pi}{2n}\right].$$
By using the formula $ 1+2\cos x+ 2\cos 2x+ 2\cos 3x+...+2\cos rx=\frac{\sin ((r+\frac{1}{2})x)}{\sin \frac{x}{2}}$ for $\sin \frac{x}{2}\neq 0$, with $x=\frac{(2n-2)h\pi}{2n}$ and formula $\sin (2\pi +\theta)=\sin(\theta)$, (here, the points $h$ that make $\sin\frac{(2n-2)h\pi}{4n} = 0$ are only $0,2n$ since $n$ is even),  we now conclude that, for $h\in C^{even}_{*}$, $$\frac{1}{8n}\sum_{r=1}^{4n} 2\cos\frac{(2n-2)hr\pi}{2n} = 0$$ i.e. $(\chi_{\rho^{h}},\chi_{\rho^{h}}) = 1$, $\forall h\in C^{even}_{*}$  . Also, it is easy to see that $(\chi_{\rho^{0}},\chi_{\rho^{o}})$ and $(\chi_{\rho^{2n}},\chi_{\rho^{2n}})$ are not 1, then $\rho^{0}$ and $\rho^{2n}$ are not irreducible which completes the proof.
\end{proof}
We now tabulate the character table for $SD_{8n}$, where $n$ is even as follows.
\begin{center}
\begin{tabular}{|c|c|c|c|c|}
  \hline
  \hline
   Conjugacy classes,& $[a^{r}];$ & $[a^{r}];$ & $[b]$ & $[ba]$ \\
   Characters& $r \in C_{1}$ & $r \in C^{\dag}_{odd}$ &  &  \\
   \hline
   \hline
  $\chi_{0}$ & 1 & 1 & 1 & 1 \\
  \hline
 $\chi_{1}$ & 1 & 1 & -1 & -1 \\
  \hline
 $\chi_{2}$ & 1 & -1 & 1 & -1 \\
  \hline
 $\chi_{3}$ & 1 & -1 & -1 & 1 \\
  \hline
  &&&&\\
  $\varsigma_{h}$, where & $2\cos(\frac{hr\pi}{2n})$ & $2\cos(\frac{hr\pi}{2n})$ & 0 & 0 \\
  $h \in  C^{\dag}_{even}$ &  &  &  &  \\
  \hline
   &&&&\\
   $\psi_{h}$, where & $2\cos(\frac{hr\pi}{2n})$ & $2i\sin(\frac{hr\pi}{2n})$ & 0 & 0 \\
  $h \in  C^{\dag}_{odd}$ &  &  &  &  \\
  \hline
  \hline
\end{tabular}\\
\end{center}
\begin{center}
    \textbf{Table I} The character table for $SD_{8n}$, where $n$ is even.
\end{center}

Now, the character table for $SD_{8n}$, where $n$ is odd, can be obtained in the same way as for even $n$.  There are eight irreducible linear representations; namely, four of them are the same as in the even case and the other four are $\chi_{4}, \chi_{5}, \chi_{6}, \chi_{7}$ defined by $\chi_{4}(a)=i$, $\chi_{4}(b)=1$, $\chi_{5}(a)=i$, $\chi_{5}(b)=-1$, $\chi_{6}(a)=-i$, $\chi_{6}(b)=1$, and $\chi_{7}(a)=-i$, $\chi_{7}(b)=-1$.

There are $2n-2$ irreducible two dimension representations defined as (\ref{two dim rep}).
\begin{lem}
For odd $n$, all $\rho^{h}$, $h\in C_{*}^{odd}$, are irreducible representations.
\end{lem}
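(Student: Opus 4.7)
The plan is to follow, almost verbatim, the strategy used in Lemma~2.4 for even $n$. By Proposition~\ref{prop character}(4), the representation $\rho^h$ is irreducible if and only if $(\chi_{\rho^h},\chi_{\rho^h})=1$. First I would observe that the matrix $\rho^h(ba^r)$ is antidiagonal, so its character vanishes on every $ba^r$ and the inner product reduces to a sum over the powers $a^r$. Expanding $|\omega^{hr}+\omega^{(2n-1)hr}|^2 = 2 + 2\cos\frac{(2n-2)hr\pi}{2n}$ exactly as before gives
$$
(\chi_{\rho^h},\chi_{\rho^h}) \;=\; 1 \;+\; \frac{1}{8n}\sum_{r=1}^{4n} 2\cos\frac{(2n-2)hr\pi}{2n}.
$$

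Next I would apply the standard Dirichlet-kernel identity $1+2\sum_{k=1}^{r}\cos(kx)=\sin((r+\tfrac12)x)/\sin(\tfrac{x}{2})$ with $x=\frac{(2n-2)h\pi}{2n}$, together with $\sin(2\pi+\theta)=\sin\theta$, to conclude that the cosine sum vanishes whenever $\sin(\tfrac{x}{2})\neq 0$. This reduces the whole lemma to identifying the set of $h\in C^{odd}$ for which $\sin\frac{(n-1)h\pi}{2n}=0$, i.e.\ $(n-1)h\equiv 0\pmod{2n}$.

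The main (modest) obstacle, and the place where the odd-$n$ argument genuinely differs from the even-$n$ one, is this divisibility computation. Since $n$ is odd, $n-1$ is even; writing $n-1=2m$, one has $\gcd(n-1,2n)=2\gcd(m,n)=2\gcd(m,2m+1)=2$. Hence $(n-1)h\equiv 0\pmod{2n}$ is equivalent to $mh\equiv 0\pmod n$, and because $\gcd(m,n)=1$ this forces $h\equiv 0\pmod n$. Intersecting with $C^{odd}$, the bad values are exactly $\{0,n,2n,3n\}$, precisely the points removed to form $C_{*}^{odd}$.

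Putting the pieces together, for every $h\in C_{*}^{odd}$ the sum formula applies, the cosine sum is $0$, and therefore $(\chi_{\rho^h},\chi_{\rho^h})=1$, which proves irreducibility. As a sanity check one can verify directly that at the four excluded values $h\in\{0,n,2n,3n\}$ the representation $\rho^h$ decomposes into two linear characters (corresponding to the four one-dimensional characters $\chi_0,\dots,\chi_7$ already written down in the odd case), which confirms that $C_{*}^{odd}$ is exactly the correct index set.
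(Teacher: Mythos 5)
Your proposal is correct and follows essentially the same route as the paper, which simply says the proof "proceeds as in the even case" and notes that the degenerate points are $h\in\{0,n,2n,3n\}$ when $n$ is odd. The only added value in your write-up is that you make explicit the divisibility argument ($\gcd(n-1,2n)=2$ with $\gcd(\tfrac{n-1}{2},n)=1$, forcing $h\equiv 0 \pmod{n}$) that the paper leaves as "since $n$ is odd," and this computation is carried out correctly.
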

\begin{proof} The proof proceeds as in Lemma \ref{number of irr re}; i.e. we check whether $ (\chi_{\rho^{h}},\chi_{\rho^{h}})$ is zero or not.  We see that the points $h$ that make $\sin\frac{(2n-2)h\pi}{4n} = 0$ are $0,n,2n$ and $3n$ since $n$ is odd and again it is easy to see that $\rho^{0}, \rho^{n}, \rho^{2n}$ and $\rho^{3n}$ are not irreducible representations.
\end{proof}

We now tabulate the character table for $SD_{8n}$, where $n$ is odd as follows.
\begin{center}
\begin{tabular}{|c|c|c|c|c|c|c|}
  \hline
  \hline
   Conjugacy classes,& $[a^{r}];$ & $[a^{r}];$ & $[b]$ & $[ba]$ & $[ba^{2}]$ & $[ba^{3}]$ \\
   Characters& $r \in C_{1}$ & $r \in C_{2,3}^{odd}$ &  &  && \\
   \hline
   \hline
  $\chi_{0}$ & 1 & 1 & 1 & 1 &1&1\\
  \hline
 $\chi_{1}$ & 1 & 1 & -1 & -1 &-1&-1 \\
  \hline
 $\chi_{2}$ & 1 & -1 & 1 & -1 &1&-1\\
  \hline
 $\chi_{3}$ & 1 & -1 & -1 & 1 &-1&1\\
  \hline
  $\chi_{4}$ & $(-1)^{\frac{r}{2}}$ & $i^{r}$ & 1 & $i$ &-1&$-i$\\
  \hline
 $\chi_{5}$ & $(-1)^{\frac{r}{2}}$& $i^{r}$ & -1 & $-i$ &1&$i$ \\
  \hline
 $\chi_{6}$ &$(-1)^{\frac{r}{2}}$ & $(-i)^{r}$ & 1 & $-i$ &-1&$i$\\
  \hline
 $\chi_{7}$ &$(-1)^{\frac{r}{2}}$ & $(-i)^{r}$ & -1 & $i$ &1&$-i$\\
  \hline
  &&&&&&\\
  $\varsigma_{h}$, where & $2\cos(\frac{hr\pi}{2n})$ & $2\cos(\frac{hr\pi}{2n})$ & 0 & 0 &0&0\\
  $h \in C^{\dag}_{even}$ &  &  &  &  &&\\
  \hline
   &&&&&&\\
   $\psi_{h}$, where & $2\cos(\frac{hr\pi}{2n})$ & $2i\sin(\frac{hr\pi}{2n})$ & 0 & 0 &0&0\\
  $h \in C^{odd}_{2,3}\setminus \{n,3n \}$ &  &  &  &  &&\\
  \hline
  \hline
\end{tabular}\\
\end{center}
\begin{center}
    \textbf{Table II} The character table for $SD_{8n}$, where $n$ is odd.
\end{center}

To count the number of disjoint cycles of elements in semidihedral group, it is necessary to see the explicit embedding.
\begin{prop}\label{embedding} For semidihedral groups, $ SD_{8n}=<a,b \mid a^{4n}=b^{2}=1,bab=a^{2n-1}>, n\geq2 $,
the embedding $T:SD_{8n}\hookrightarrow S_{4n}$ is given by $T(a)(t):=\overline{t+1}$ and $T(b)(t):=\overline{(2n-1)t}$, where $\overline{m}$ is the remainder of $m$ divided by $4n$.  Explicitly,
\begin{eqnarray*}
  T(a) &=& (\begin{array}{ccccc}
              1 & 2 & 3 & \cdots & 4n
            \end{array}
  ) \\
  T(b) &=& \prod_{i \in C^{even}_{*}}(\begin{array}{cc}
                                        i & \overline{(2n-1)i}
                                      \end{array})
    \hbox{ if n is even,}\\
  T(b) &=& \prod_{i \in C^{odd}_{*}}( \begin{array}{cc}
                                        i & \overline{(2n-1)i}
                                      \end{array}) \hbox{ if n is odd,}
\end{eqnarray*}
\end{prop}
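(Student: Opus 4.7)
The plan is to prove the proposition in three steps: first establish that $T$ gives a well-defined homomorphism $SD_{8n} \to S_{4n}$, then verify that $T$ is injective, and finally derive the claimed cycle decompositions of the two generators.

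To show $T$ is well defined, one first checks that both $T(a)$ and $T(b)$ lie in $S_{4n}$. The map $T(a)$ is a translation modulo $4n$, hence a bijection; $T(b)$ is multiplication by $2n-1$ modulo $4n$, which is bijective because $\gcd(2n-1,\,4n) = \gcd(2n-1,\,2) = 1$. One then verifies by direct computation modulo $4n$ that
\[
T(a)^{4n}(t) \equiv t, \qquad T(b)^{2}(t) = (2n-1)^{2}\,t \equiv t,
\]
and
\[
T(b)\,T(a)\,T(b)(t) = (2n-1)\bigl((2n-1)t + 1\bigr) \equiv t + (2n-1) = T(a^{2n-1})(t),
\]
which are precisely the defining relations of $SD_{8n}$, so $T$ is a group homomorphism.

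Injectivity follows by an identification argument: from the formulas $T(a^{r})(t) \equiv t + r$ and $T(ba^{r})(t) \equiv (2n-1)(t + r)$, different values of $r \in \{0,1,\ldots,4n-1\}$ give different affine maps, and elements of the two cosets $\langle a\rangle$ and $b\langle a\rangle$ are distinguished by their ``slope'' $1$ versus $2n-1 \neq 1$. For the cycle decompositions, $T(a)$ is visibly the single $4n$-cycle $(1,2,\ldots,4n)$. Since $T(b)^{2} = \mathrm{id}$, the involution $T(b)$ factors as a product of transpositions $(i,\overline{(2n-1)i})$ together with its fixed points. Those fixed points solve $(n-1)t \equiv 0 \pmod{2n}$; inspecting $\gcd(n-1,2n)$ yields the fixed set $\{0,2n\}$ when $n$ is even and $\{0,n,2n,3n\}$ when $n$ is odd, matching the points excised in forming $C^{even}_{*}$ and $C^{odd}_{*}$.

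The main bookkeeping step, and the only part requiring real care, is showing that $C^{even}_{*}$ (respectively $C^{odd}_{*}$) is a system of orbit representatives for the non-trivial $T(b)$-orbits, so each transposition is listed exactly once in the product. I would handle this by splitting $C^{even}_{*}$ along the partition $(C_{1} \setminus \{0,2n\}) \cup C^{even}_{2} \cup C^{even}_{3}$ and computing $\overline{(2n-1)i}$ on each piece: for even $i$ one has $\overline{(2n-1)i} = 4n - i$, while for odd $i$ one uses the identity $(2n-1)i \equiv 2n - i \pmod{4n}$. A short count then confirms that the resulting pairs are pairwise disjoint and cover all $4n-2$ non-fixed points exactly once, yielding $2n-1$ transpositions; the same method applied to $C^{odd} = C_{1} \cup C^{odd}_{2} \cup C^{odd}_{3}$ handles the odd case. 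Everything outside this combinatorial verification reduces to straightforward modular arithmetic.
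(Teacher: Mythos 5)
Your proposal is correct and its core — verifying the defining relations $a^{4n}=b^2=1$ and $bab=a^{2n-1}$ by direct modular arithmetic on the maps $t\mapsto \overline{t+1}$ and $t\mapsto\overline{(2n-1)t}$ — is exactly the paper's argument. You go further than the paper by also checking injectivity (via the affine-map identification) and by actually verifying that $C^{even}_*$, resp.\ $C^{odd}_*$, indexes the transpositions of the involution $T(b)$ with fixed-point set $\{0,2n\}$, resp.\ $\{0,n,2n,3n\}$; the paper asserts these cycle structures without proof, so your additions are sound and welcome rather than a different route.
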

\begin{proof} Since the lengths of each cycle in $T(a)$ and $T(b)$ are $4n$ and $2$ respectively, $T(a^{4n})=e=(T(a))^{4n}$ and $T(b^{2})=e=(T(b))^{2}$. Also
\begin{eqnarray*}
T(bab)(t)&\overset{4n}{\equiv}&(2n-1)((2n-1)t+1)  \\
    &\overset{4n}{\equiv}& t+(2n-1) \\
     &\overset{4n}{\equiv}& T(a^{2n-1})(t).
\end{eqnarray*}
\end{proof}

Now, by using the embedding in Proposition \ref{embedding}, we obtain the following result.
\begin{prop} \label{c(g)}  Let $c(g)$ be the number of cycles, including cycles of length one, in the disjoint cycle factorization of $T(g)$.  Then, for even $n$,
$$\begin{array}{ccc}
    c(a^{r})=\gcd(4n,r), & c(a^{2r-1}b)=n, & c(a^{2r}b)=2n+1 .
  \end{array}
  $$
For odd $n$;
$$\begin{array}{ccc}
   c(a^{r})=\gcd(4n,r), & c(a^{2r-1}b)=n, & c(a^{2r}b)=\left\{
                                                                               \begin{array}{ll}
                                                                                 2n+2, & \hbox{if $r$ is even;} \\
                                                                                 2n, & \hbox{if $r$ is odd.}
                                                                               \end{array}
                                                                             \right.
  \end{array}
  $$
\end{prop}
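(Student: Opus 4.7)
The plan is to write every element of $T(SD_{8n})$ as an explicit map on $\mathbb{Z}/4n\mathbb{Z}$ and read off its cycle type from elementary number theory. Since $T$ is a group homomorphism, the formula $T(a)(t)=\overline{t+1}$ gives $T(a^r)(t)=\overline{t+r}$, a translation on $\mathbb{Z}/4n\mathbb{Z}$; the number of cycles of such a translation is classically $\gcd(4n,r)$, which yields the formula for $c(a^r)$ in both parities of $n$. For the reflection-type elements I would compute $T(a^k b)(t)=\overline{(2n-1)t+k}$.

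The key algebraic identity is $(2n-1)^2 = 4n^2-4n+1 \equiv 1 \pmod{4n}$. Squaring $T(a^k b)$ therefore gives $(T(a^k b))^2(t)=\overline{t+2nk}$, and the cycle structure of $T(a^k b)$ follows from a case split on the parity of $k$. If $k=2r-1$ is odd, then $(T(a^k b))^2$ is translation by $2n$, a fixed-point-free involution; hence $T(a^k b)$ has order exactly $4$ and can contain neither a fixed point nor a transposition, since either would be preserved by squaring. All of its orbits must then have length $4$, so it decomposes into $4n/4=n$ cycles, giving $c(a^{2r-1}b)=n$ in both parity cases for $n$.

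If $k=2r$ is even, then $(T(a^k b))^2=\mathrm{id}$, so $T(a^k b)$ is itself an involution and its total number of cycles equals $f+(4n-f)/2$, where $f$ is its number of fixed points. Solving $(2n-1)t+2r\equiv t\pmod{4n}$ reduces to $(n-1)t\equiv -r\pmod{2n}$, and $f$ is controlled by $d:=\gcd(n-1,2n)$; a quick check shows $d=1$ when $n$ is even and $d=2$ when $n$ is odd. Each residue solution modulo $2n$ lifts to two solutions modulo $4n$, so for even $n$ one obtains $f=2$ and $c(a^{2r}b)=2n+1$; for odd $n$ one obtains $f=4$ when $r$ is even (giving $c=2n+2$) and $f=0$ when $r$ is odd (giving $c=2n$), matching the stated formulas.

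The main obstacle is the bookkeeping in the even-$k$ case: one has to solve the congruence $(n-1)t\equiv -r\pmod{2n}$ carefully with its parity-dependent gcd, remember to double the solution count when lifting from $\mathbb{Z}/2n\mathbb{Z}$ back to $\mathbb{Z}/4n\mathbb{Z}$, and then track the secondary parity condition on $r$ in the odd-$n$ subcase. Everything else reduces to the single identity $(2n-1)^2\equiv 1\pmod{4n}$ together with the standard cycle count for translations on a cyclic group.
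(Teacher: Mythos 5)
Your argument is correct, and it reaches the conclusion by a genuinely different route from the paper. The paper works entirely by explicit cycle listing: for even $k$ it splits on the parity of $t$ to write $T(a^{k}b)_{(t)}$ as an explicit transposition $(t,\ \overline{2n-t+k})$ or $(t,\ \overline{k-t})$, then tabulates the fixed points over the cases $\gcd(k,4)=2$ or $4$ and $n$ even or odd; for odd $k$ it writes out the $4$-cycles explicitly and checks the four entries are distinct mod $4n$. You instead extract everything from the single identity $(2n-1)^{2}\equiv 1 \pmod{4n}$, which gives $(T(a^{k}b))^{2}(t)=\overline{t+2nk}$: for odd $k$ the square is the fixed-point-free translation by $2n$, forcing all orbits to have length $4$ (the paper's explicit $4$-cycle verification becomes unnecessary), and for even $k$ the element is an involution whose fixed points are counted by one linear congruence $(n-1)t\equiv -r \pmod{2n}$ via $\gcd(n-1,2n)\in\{1,2\}$, with the lift from $\mathbb{Z}/2n\mathbb{Z}$ to $\mathbb{Z}/4n\mathbb{Z}$ doubling the count. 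Your version is more structural and replaces the paper's parity-of-$t$ case analysis and fixed-point table with a uniform congruence argument; the paper's version has the minor advantage of exhibiting the cycles themselves (which it reuses implicitly nowhere else, so nothing is lost). All the arithmetic in your reduction checks out, including $f=2$ for even $n$, and $f=4$ or $0$ for odd $n$ according to the parity of $r$, yielding $2n+1$, $2n+2$, $2n$ respectively.
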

\begin{proof}  Notice that $<a^{r}>$ has as many elements as the order of $a^{r}$.  Thus the number of elements of $<a^{r}>$  is $4n/\gcd(4n,r)$ and hence we have $c(a^{r})=\gcd(4n,r)$ and $c(1)=4n$.  For $c(a^{r}b)$, we use the action of $T(a^{r})$ sending $t$ to $\overline{t+r}$ and the action of $T(b)$ sending $t$ to $\overline{t(2n-1)}$. We thus compute that for $t \in \{ 1,2,...,4n\}$,
$$ T(a^{r}b)_{(t)}=\left\{
                     \begin{array}{ll}
                       (\begin{array}{cc}
                         t & \overline{2n-t+r}
                       \end{array})
                       , & \hbox{if r is even and t is odd;} \\
                       (\begin{array}{cc}
                          t & \overline{r-t}
                        \end{array}
                       ), & \hbox{if r is even and t is even.}
                     \end{array}
                   \right.
 $$
Thus these cycles have length one if $t = \overline{2n-t+r} $ or $t = \overline{r-t} $.  That is, for even $r$, we  calculate and tabulate the cycles $T(a^{r}b)_{(t)}$ (here, $T(s)_{(t)}$ is a cycle in the decomposition of $T(s)$ starting with $t$) of length one as
\begin{center}
   \begin{tabular}{|c|l|l|l|l|}
       \hline
       \hline
      \textbf{ r even} & \textbf{n even; t odd} & \textbf{n even; t even} & \textbf{n odd; t odd} & \textbf{n odd; t even} \\
       \hline
       &&&&\\
       $\gcd(r,4)=4$ & No such $t$ & $t=\overline{2n+\frac{r}{2}}$ and & $t=\overline{n+\frac{r}{2}}$ and & $t=\overline{2n+\frac{r}{2}}$ and \\
        & & $t=\overline{\frac{r}{2}}$ & $t=\overline{3n+\frac{r}{2}}$& $t=\overline{\frac{r}{2}}$ \\
         &&&&\\
        \hline
        &&&&\\
       $\gcd(r,4)=2$ & $t=\overline{n+\frac{r}{2}}$ and &  No such $t$&  No such $t$ &  No such $t$ \\
        & $t=\overline{3n+\frac{r}{2}}$ &  &  & \\
         &&&&\\
       \hline
       \hline
     \end{tabular}
\end{center}
Hence, for even $r$ and even $n$, we see that $T(a^{r}b)$ contains only two cycles of length $1$ and then in this case $c(a^{r}b)=\frac{4n-2}{2}+2=2n+1$.  Also, for even $r$ such that $\gcd(r,4)=4$ and $n$ odd, we see that $T(a^{r}b)$ contains only four cycles of length $1$ and then in this case $c(a^{r}b)=\frac{4n-4}{2}+4=2n+2$.  For even $r$ such $\gcd(r,4)=2$ and $n$ odd, we see that $T(a^{r}b)$ contains no cycle of length $1$ and then in this case $c(a^{r}b)=\frac{4n}{2}=2n$.

For odd $r$  and $t \in \{ 1,2,...,4n\}$, we compute that
$$ T(a^{r}b)_{(t)}=\left\{
                     \begin{array}{ll}
                       (\begin{array}{cccc}
                         t & \overline{2n+r-t} & \overline{2n+t} & \overline{r-t}
                       \end{array})
                       , & \hbox{if  t is odd;} \\
                       (\begin{array}{cccc}
                          t & \overline{r-t} & \overline{2n+t} & \overline{2n+r-t}
                        \end{array}
                       ), & \hbox{if t is even.}
                     \end{array}
                   \right.
 $$
 Since these numbers are not congruent in modulo $4n$, all cycles in the factors of $T(a^{r}b)$ in this case are of length $4$ and hence, for odd $r$ and any $n$, $c(a^{r}b)=\frac{4n}{4}=n$.
\end{proof}

In the following theorems, we find the dimensions of the symmetry classes of tensors associated with the group $SD_{8n}$.\\

\begin{thm}\label{thmdim1}  Let $G=SD_{8n}$, $n$ even, and let $V$ be an $m$-dimensional inner product space. Let $\overline{S}=\{ \overline{s(2n-1)} \mid s \in S, \hbox{ where $\overline{r}$ is the remainder of $r$ divided by $4n$}\}$, $\varsigma_{h}$ for $h\in  C^{\dag}_{even}$ and $\psi_{h'}$ for $h'\in C^{\dag}_{odd}$, then we have
\begin{eqnarray*}
 \dim V^{4n}_{\chi_0}(G)&=& \frac{1}{8n}\left[2nm^{n}+2nm^{2n+1}+\sum_{k\in \{0,1,2,...,4n-1 \}}m^{\gcd(4n,k)}  \right] \\
 \dim V^{4n}_{\chi_1}(G)&=&\frac{1}{8n}\left[-2nm^{n}-2nm^{2n+1}+\sum_{k\in \{0,1,2,...,4n-1 \}}m^{\gcd(4n,k)}   \right] \\
\dim V^{4n}_{\chi_2}(G)&=& \frac{1}{8n}\left[m^{4n}+m^{2n}+2\sum_{k\in  C^{\dag}_{even}}m^{\gcd(4n,k)}-\sum_{k \in C^{\dag}_{odd} \cup \overline{C^{\dag}_{odd}} }m^{\gcd(4n,k)}+ 2nm^{2n+1}-2nm^{n}  \right]\\
\dim V^{4n}_{\chi_3}(G)&=& \frac{1}{8n}\left[m^{4n}+m^{2n}+2\sum_{k\in  C^{\dag}_{even}}m^{\gcd(4n,k)}-\sum_{k \in C^{\dag}_{odd} \cup \overline{C^{\dag}_{odd}} }m^{\gcd(4n,k)} - 2nm^{2n+1}+2nm^{n}  \right]\\
\dim V^{4n}_{\varsigma_{h}}(G)&=& \frac{1}{2n}\left[ \sum_{k\in \{ 0,1,2,...,4n-1\}}m^{\gcd(4n,k)} \cos (\frac{hk\pi}{2n}) \right] \\
\dim V^{4n}_{\psi_{h'}}(G)&=&\frac{1}{4n}\left[ m^{4n}-m^{2n}+4\sum_{k\in  C^{\dag}_{even}}m^{\gcd(4n,k)} \cos (\frac{h'k\pi}{2n}) \right]
\end{eqnarray*}
\end{thm}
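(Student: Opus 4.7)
The plan is to apply formula (\ref{important1}) directly, noting that under the embedding $T:SD_{8n}\hookrightarrow S_{4n}$ of Proposition \ref{embedding},
\[
\dim V^{4n}_{\chi}(SD_{8n})=\frac{\chi(1)}{8n}\sum_{\sigma\in SD_{8n}}\chi(\sigma)\,m^{c(\sigma)}.
\]
I would partition $SD_{8n}$ into $A_{0}=\{a^{r}\}_{r=0}^{4n-1}$, $A_{1}=\{a^{2s-1}b\}_{s=1}^{2n}=[ba]$ and $A_{2}=\{a^{2s}b\}_{s=0}^{2n-1}=[b]$, on which Proposition \ref{c(g)} yields the uniform cycle counts $\gcd(4n,r)$, $n$ and $2n+1$ respectively. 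All character values are then read directly from Table I; in particular both $\varsigma_{h}$ and $\psi_{h}$ vanish on $A_{1}\cup A_{2}$, so only $A_{0}$ contributes for them.

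For the four linear characters the sum splits cleanly. For $\chi_{0},\chi_{1}$ the values on $A_{0}$ are identically $1$, so the $A_{0}$-part is $\sum_{r=0}^{4n-1}m^{\gcd(4n,r)}$ and the $A_{1},A_{2}$-parts contribute $\pm 2nm^{n}$ and $\pm 2nm^{2n+1}$ (sign depending on $\chi(b)=\pm 1$). For $\chi_{2},\chi_{3}$, where $\chi(a^{r})=(-1)^{r}$, I would split the $A_{0}$-sum by parity of $r$: the even-$r$ portion simplifies to $m^{4n}+m^{2n}+2\sum_{k\in C^{\dag}_{even}}m^{\gcd(4n,k)}$ after pairing $r\leftrightarrow 4n-r$ (with the boundary $r\in\{0,2n\}$ giving the first two terms), while the odd-$r$ portion equals $\sum_{k\in C^{\dag}_{odd}\cup\overline{C^{\dag}_{odd}}}m^{\gcd(4n,k)}$ once one checks that $C^{\dag}_{odd}$ together with its $(2n-1)$-twist $\overline{C^{\dag}_{odd}}$ enumerates each odd integer of $\{1,\dots,4n-1\}$ exactly once. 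Combining these with the $A_{1}$ and $A_{2}$ contributions gives the $\chi_{2}$ and $\chi_{3}$ formulas.

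The two-dimensional characters use only $A_{0}$. For $\varsigma_{h}$ with $h\in C^{\dag}_{even}$, Table I gives the uniform value $2\cos(hr\pi/2n)$ on both $C_{1}$ and $C^{\dag}_{odd}$, and folding the class-size factors via $r\leftrightarrow 4n-r$ rewrites the total as $\sum_{r=0}^{4n-1}2\cos(hr\pi/2n)m^{\gcd(4n,r)}$, producing the stated $1/(2n)$-formula after division by $8n/2$. For $\psi_{h}$ with $h\in C^{\dag}_{odd}$, the character is $2\cos$ on $C_{1}$ but the purely imaginary $2i\sin$ on $C^{\dag}_{odd}$, and the main technical step, which I expect to be the most delicate, is showing that the sine contribution vanishes. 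I would pair $r\in C^{even}_{2}$ with $2n+r\in C^{even}_{3}$: since $h$ is odd, $\sin(h(2n+r)\pi/2n)=-\sin(hr\pi/2n)$, while $\gcd(4n,2n+r)=\gcd(4n,r)$ for odd $r$ (both are odd divisors of $4n$ and hence divide $n$, and $r\equiv 2n+r\pmod n$), so every pair cancels. The surviving even-$r$ cosine sum reduces by $r\leftrightarrow 4n-r$ to the boundary terms $m^{4n}-m^{2n}$ (using $\cos(h\pi)=-1$ for odd $h$) plus $4\sum_{k\in C^{\dag}_{even}}\cos(hk\pi/2n)m^{\gcd(4n,k)}$, yielding the last formula after division by $4n$.

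Beyond the sine cancellation, the entire computation reduces to direct substitution from Table I and Proposition \ref{c(g)} together with the pairing symmetry $r\leftrightarrow 4n-r$. The only real obstacle is the careful bookkeeping of conjugacy-class sizes versus element counts, and the identification of $C^{\dag}_{odd}\cup\overline{C^{\dag}_{odd}}$ with the set of odd integers in $\{1,\dots,4n-1\}$; once these are in hand the six formulas drop out mechanically.
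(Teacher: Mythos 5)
Your proposal follows the paper's own proof essentially verbatim: both apply (\ref{important1}) with the character values read from Table I and the cycle counts from Proposition \ref{c(g)}, and both use the folding $k\leftrightarrow 4n-k$ (equivalently $\overline{(2n-1)k}=4n-k$ for even $k$, $\gcd(4n,4n-k)=\gcd(4n,k)$) to collect class representatives. The one point where you genuinely go beyond the paper is the treatment of the sine terms in the $\psi_{h'}$ computation: the paper dismisses them on the soft grounds that the dimension must be a real integer, whereas you cancel them explicitly by pairing $r$ with $r+2n$, using $\sin(h'(r+2n)\pi/2n)=-\sin(h'r\pi/2n)$ for odd $h'$ together with $\gcd(4n,r+2n)=\gcd(4n,r)$ for odd $r$. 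That argument is correct and, if anything, cleaner than the paper's.

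There is, however, one concrete arithmetic slip in your $\psi_{h'}$ bookkeeping, and it is worth flagging because it coincides with an internal inconsistency in the displayed formula itself. You keep the full character value $2\cos(h'k\pi/2n)$ on the paired even terms (hence the coefficient $4$ in $4\sum_{k\in C^{\dag}_{even}}$), but you assign the boundary terms $r=0$ and $r=2n$ the contributions $m^{4n}$ and $-m^{2n}$. Since $\psi_{h'}(1)=2$ and $\psi_{h'}(a^{2n})=2\cos(h'\pi)=-2$, these boundary contributions to $\sum_{\sigma}\psi_{h'}(\sigma)m^{c(\sigma)}$ are in fact $2m^{4n}$ and $-2m^{2n}$. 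Carrying your own computation through consistently yields $\dim V^{4n}_{\psi_{h'}}(G)=\frac{1}{4n}\bigl[2m^{4n}-2m^{2n}+4\sum_{k\in C^{\dag}_{even}}m^{\gcd(4n,k)}\cos(\frac{h'k\pi}{2n})\bigr]$, that is, $\frac{1}{2n}\bigl[m^{4n}-m^{2n}+2\sum_{k\in C^{\dag}_{even}}m^{\gcd(4n,k)}\cos(\frac{h'k\pi}{2n})\bigr]$. A check at $n=2$, $h'=1$, $m=2$ confirms this: the direct sum over $SD_{16}$ gives $\frac{2}{16}(2\cdot 2^{8}-2\cdot 2^{4})=60$, while the formula as printed evaluates to $30$. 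So your derivation, done carefully, is sound, but the last line you claim to reach (and the theorem as stated) is off by this factor of $2$ on the $m^{4n}$ and $m^{2n}$ terms; the rest of the six formulas check out.
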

\begin{proof} This follows by (\ref{important1}) and Table I together with Proposition \ref{c(g)}.  For $\dim V^{4n}_{\chi_2}(G)$, $\dim V^{4n}_{\chi_3}(G)$ and $\dim V^{4n}_{\psi_{h'}}(G)$, we have used  the fact that $\overline{(2n-1)k}=4n-k$ for $k \in C_{1}$ and $\gcd(4n,4n-k)=\gcd(4n,k)$.  Note also, for $\dim V^{4n}_{\psi_{h'}}(G)$,  we do not need to compute $\psi_{h'}(g)$ for $g \in C^{\dag}_{odd}$ since they are imaginary number (the dimension must be integer); i.e.,
\begin{equation}\label{isin}
    2i\sum_{k\in C^{\dag}_{odd}\cup \overline{C^{\dag}_{odd}} }m^{\gcd(4n,k)} \sin (\frac{h'k\pi}{2n})=0.
\end{equation}
\end{proof}
\begin{thm}\label{thmdim 2}  Let $G=SD_{8n}$, $n$ odd, and let $V$ be an $m$-dimensional inner product space. Let $\overline{S}=\{ \overline{s(2n-1)} \mid s \in S, \hbox{ where $\overline{r}$ is the remainder of r divided by $4n$}\}$, $\varsigma_{h}$ for $h\in C^{\dag}_{even}$ and $\psi_{h'}$ for $h' \in C^{odd}_{2,3}\setminus\{n,3n\}$, then we have
\begin{eqnarray*}
 \dim V^{4n}_{\chi_0}(G)&=& \frac{1}{8n}\left[2nm^{n}+nm^{2n}+nm^{2n+2}+ \sum_{k\in \{ 0,1,2,...,4n-1\}}m^{\gcd(4n,k)} \right] \\
\dim V^{4n}_{\chi_1}(G)&=& \frac{1}{8n}\left[-2nm^{n}-nm^{2n}-nm^{2n+2}+ \sum_{k\in \{ 0,1,2,...,4n-1\}}m^{\gcd(4n,k)}  \right] \\
\dim V^{4n}_{\chi_2}(G)&=& \frac{1}{8n}\left[ m^{4n}+m^{2n}-2m^{n}+2\sum_{k\in C^{\dag}_{even}}m^{\gcd(4n,k)}-\sum_{k \in C^{\dag}_{odd}\cup \overline{C^{\dag}_{odd}}}m^{\gcd(4n,k)} -2 nm^{n}\right.\\
&+&  \big. nm^{2n}+nm^{2n+2}  \big]\\
\dim V^{4n}_{\chi_3}(G)&=& \frac{1}{8n}\left[ m^{4n}+m^{2n}-2m^{n}+2\sum_{k\in C^{\dag}_{even}}m^{\gcd(4n,k)}-\sum_{k \in C^{\dag}_{odd}\cup \overline{C^{\dag}_{odd}}}m^{\gcd(4n,k)}2 nm^{n}\right. \\
&-&\big. nm^{2n}-nm^{2n+2}  \big]\\
\dim V^{4n}_{\chi_4}(G)&=& \frac{1}{8n}\left[ m^{4n}-m^{2n}+\sum_{k\in C^{\dag}_{even}\cup \overline{C^{\dag}_{even}}}(-1)^{\frac{k}{2}}m^{\gcd(4n,k)} + nm^{2n+2} - nm^{2n}  \right]\\
\dim V^{4n}_{\chi_5}(G)&=&\frac{1}{8n}\left[m^{4n}-m^{2n}+\sum_{k\in C^{\dag}_{even}\cup \overline{C^{\dag}_{even}}}(-1)^{\frac{k}{2}}m^{\gcd(4n,k)} - nm^{2n+2} + nm^{2n}   \right] \\
\dim V^{4n}_{\chi_6}(G)&=&\frac{1}{8n}\left[ m^{4n}-m^{2n}+\sum_{k\in C^{\dag}_{even}\cup \overline{C^{\dag}_{even}}}(-1)^{\frac{k}{2}}m^{\gcd(4n,k)}+ nm^{2n+2} - nm^{2n}  \right] \\
\dim V^{4n}_{\chi_7}(G)&=&\frac{1}{8n}\left[m^{4n}-m^{2n}+\sum_{k\in C^{\dag}_{even}\cup \overline{C^{\dag}_{even}}}(-1)^{\frac{k}{2}}m^{\gcd(4n,k)} - nm^{2n+2} + nm^{2n} \right]\\
\dim V^{4n}_{\varsigma_{h}}(G)&=& \frac{1}{2n}\left[ \sum_{k\in \{ 0,1,2,...,4n-1\}}m^{\gcd(4n,k)} \cos (\frac{hk\pi}{2n}) \right] \\
\dim V^{4n}_{\psi_{h'}}(G)&=&\frac{1}{4n}\left[ m^{4n}-m^{2n}+4\sum_{k\in C^{\dag}_{even}}m^{\gcd(4n,k)} \cos (\frac{h'k\pi}{2n}) \right]
\end{eqnarray*}
\end{thm}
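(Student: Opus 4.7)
The plan is to specialize the dimension formula (\ref{important1}) to each of the irreducible characters in \textbf{Table II} and combine it with the cycle counts from Proposition \ref{c(g)}. Since $SD_{8n}$ is partitioned as $\{a^r : 0 \le r \le 4n-1\} \cup \{ba^r : 0 \le r \le 4n-1\}$, the sum in (\ref{important1}) breaks as
\begin{equation*}
\dim V^{4n}_{\chi}(G) \;=\; \frac{\chi(1)}{8n}\left[\sum_{r=0}^{4n-1}\chi(a^r)\,m^{\gcd(4n,r)} \;+\; \sum_{r=0}^{4n-1}\chi(ba^r)\,m^{c(ba^r)}\right],
\end{equation*}
and I would evaluate the two partial sums case-by-case for each of the $2n+6$ irreducible characters.

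For the reflection sum, Proposition \ref{conjuagacy} groups the $4n$ elements $ba^r$ into the four classes $[b], [ba], [ba^2], [ba^3]$ according to the residue of $r$ modulo $4$, each of size $n$, while Proposition \ref{c(g)} assigns $c(ba^r) = n$ when $r$ is odd and $c(ba^r) = 2n+2$ or $2n$ when $r \equiv 0$ or $2 \pmod{4}$ respectively. Plugging in the character values from Table II then produces the coefficients of $m^n, m^{2n}, m^{2n+2}$ in every formula (and zero in the two-dimensional cases, since $\varsigma_h$ and $\psi_{h'}$ vanish on $b$-type classes). For the rotation sum, the identity $\gcd(4n, (2n-1)r) = \gcd(4n, r)$, which holds because $\gcd(4n, 2n-1) = 1$, shows that $c(a^r)$ is constant on conjugacy classes; hence the four singletons $r \in \{0, n, 2n, 3n\}$ produce the isolated $m^{4n}, m^{2n}, \pm m^n$ terms, while each doubleton class contributes $2\chi(a^r) m^{\gcd(4n,r)}$, assembling into the sums over $C^{\dag}_{even} \cup \overline{C^{\dag}_{even}}$ and $C^{\dag}_{odd} \cup \overline{C^{\dag}_{odd}}$ that appear in the statement.

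The only delicate point, and what I view as the main obstacle, is that the characters $\chi_4, \ldots, \chi_7$ and $\psi_{h'}$ take purely imaginary values on the odd-$r$ classes, namely $i^r, (-i)^r$, or $2i\sin(h'r\pi/2n)$. Since the dimension must be a non-negative integer, these imaginary contributions have to cancel. For $\psi_{h'}$ this cancellation is precisely the analogue of equation (\ref{isin}) from Theorem \ref{thmdim1}; for $\chi_4, \ldots, \chi_7$ it follows from the involution $r \mapsto 4n - r$, which preserves $\gcd(4n,r)$ and interchanges the residues $1$ and $3$ modulo $4$, so the $i$- and $(-i)$-weighted sums are equal and the difference vanishes. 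An identical cancellation at $[ba] \cup [ba^3]$ disposes of the imaginary part in the reflection sum. Once these vanish, only the real cosine contributions and the $(-1)^{k/2}$ signs on the even part survive, reproducing the displayed formulas.
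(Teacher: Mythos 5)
Your proposal is correct and follows essentially the same route as the paper: the paper's proof of this theorem simply defers to the proof of Theorem \ref{thmdim1}, which likewise consists of substituting the character values from the table and the cycle counts from Proposition \ref{c(g)} into formula (\ref{important1}) and invoking the vanishing of the imaginary contributions. Your explicit justification of that vanishing via the involution $r \mapsto 4n-r$ (rather than the paper's appeal to integrality of the dimension, as in (\ref{isin})) is a minor strengthening but not a different method.
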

\begin{proof} The proof is similar to the proof of Theorem \ref{thmdim1}.
\end{proof}

\section{\bf  On the existence of an orthogonal basis for the symmetry classes of tensors associated with $SD_{8n}$}

In this section we study the existence of an orthogonal basis for the symmetry classes of tensors associated with $SD_{8n}$.  According to Section 2 and Table I,II, we have four (in case of even $n$) or eight (in case of odd $n$) irreducible characters of degree one  and $n-1$ characters $\varsigma_{h}$, $h\in  C^{\dag}_{even}$ of degree $2$ ( in case of both even and odd $n$), and a further $n$ characters $\psi_{h'}$, $h'\in  C^{\dag}_{odd}$ of degree $2$ (in case of even $n$) or $n-1$ characters $\psi_{h'}$, $h' \in  C^{odd}_{2,3}\setminus\{n,3n \}$ of degree $2$ (in case of odd $n$). As we explained in the introduction if $\chi$ is a linear character of $G$ then the symmetry class of tensors associated with $G$ and $\chi$ has an orthogonal basis. Therefore we will concentrate on  non-linear irreducible complex characters of $SD_{8n}$, i.e. the characters $\varsigma_{h}$, $h\in  C^{\dag}_{even}$ and $\psi_{h'}$ for $h' \in  C^{\dag}_{odd} $ or $h' \in  C^{odd}_{2,3}\setminus\{n,3n \} $. \\

 It turns out that $V_{\psi_{h'}}(SD_{8n})$ does not have an orthogonal basis for any odd or even $n$ (see Theorem \ref{thm odd}). The same result is obtained for $V_{\varsigma_{h}}(SD_{8n})$ if $n$ is odd (see Corollary \ref{cor odd}).  However,  there is an orthogonal basis for $V_{\varsigma_{h}}(SD_{8n})$ if $h\in  C^{\dag}_{even}$ and the condition $\nu_{2}(\frac{h}{2n})<0$ holds (see Theorem \ref{main thm1}).

\begin{rem}
 Let $\nu_2$ be the $2$-adic valuation, that is $\nu(\frac{2^{k}m}{n})=k$ for $m$ and $n$ odd. Then, the condition $\nu_2(\frac{h}{2n})< 0$ means that every power of $2$ that divides $h$ also divides $n$.
\end{rem}
\begin{lem}
\label{l1}
 Let $G:=SD_{8n}$ and $H$ be a subgroup of $G$. Then there is a natural number $r$, $0\leq r < 4n$ such that $H = \langle a^r \rangle $ or $\langle a^r\rangle \lneqq H$ and $H\cap \langle a \rangle =\langle a^r \rangle$. In the second case we have $|H|\geq 2 |\langle a^r \rangle|$.
\end{lem}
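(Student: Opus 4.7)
The key observation is that $\langle a \rangle$ is a cyclic subgroup of $G = SD_{8n}$ of order $4n$, and since it has index $2$ in $G$ it is automatically normal. The plan is to analyze $H$ through its intersection with $\langle a \rangle$ and then split into two cases depending on whether $H$ sits inside $\langle a \rangle$ or not.

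First I would set $K := H \cap \langle a \rangle$. Being a subgroup of the cyclic group $\langle a \rangle$ of order $4n$, $K$ is itself cyclic, and there is a unique integer $r$ with $0 \leq r < 4n$ such that $K = \langle a^r \rangle$: take $r = 0$ if $K$ is trivial (so $\langle a^0\rangle = \{1\}$), and otherwise let $r$ be the smallest positive integer with $a^r \in H$, in which case $r$ necessarily divides $4n$. This supplies the $r$ whose existence is asserted.

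Next I would distinguish the two cases. If $H \subseteq \langle a \rangle$, then $H = K = \langle a^r \rangle$, which is the first alternative. Otherwise $H$ contains some element of the form $ba^k$, since every element of $G \setminus \langle a \rangle$ has this form; such an element does not lie in $\langle a \rangle$, so $\langle a^r \rangle = K \subsetneq H$. For the size bound, I would apply the second isomorphism theorem to the subgroups $H$ and $\langle a \rangle$ of $G$, using that $\langle a \rangle$ is normal of index $2$:
\[
[H : H \cap \langle a \rangle] \;=\; [H\langle a \rangle : \langle a \rangle] \;\leq\; [G : \langle a \rangle] \;=\; 2.
\]
Since $H \not\subseteq \langle a \rangle$ in this case, $H\langle a \rangle$ strictly contains $\langle a \rangle$, so the above index equals $2$, and therefore $|H| = 2|K| = 2|\langle a^r \rangle|$, as required.

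The argument is essentially standard: it only uses the classification of subgroups of a cyclic group together with the fact that $\langle a \rangle$ is an index-$2$ normal subgroup of $SD_{8n}$. The only piece of bookkeeping that needs a word is the degenerate case $K = \{1\}$, which we handle by allowing $r = 0$. I do not anticipate any serious obstacle here.
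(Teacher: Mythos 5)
Your proof is correct and is exactly the standard argument the authors had in mind --- the paper's own proof of this lemma consists solely of the words ``It is straightforward,'' so you have simply supplied the omitted details (cyclicity of $H\cap\langle a\rangle$, plus the second isomorphism theorem applied to the normal index-$2$ subgroup $\langle a\rangle$, which in fact yields the equality $|H|=2|\langle a^r\rangle|$ rather than just the stated inequality). The only cosmetic slip is the claim that $r$ is \emph{unique}: several exponents can generate the same cyclic subgroup, but since the lemma only asserts existence this does not affect anything.
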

\begin{proof} It is straightforward.
\end{proof}
\begin{lem}\label{l2} Suppose $\varsigma=\varsigma_{h}$.  If $r$ is defined by $ G_\alpha \cap \langle a \rangle =\langle a^r \rangle$ and $l=\frac{4n}{gcd(4n,r)}$, then we have
  $$
\sum_{g\in G_\alpha}\varsigma (g)= \left\{\begin{array}{ll} 2l,\hspace{0.7cm} \text{if} \hspace{0.7cm}rh \equiv0 ~ (mod ~4n)  \\
0,\hspace{0.7cm} \text{if} \hspace{0.9cm} rh \not \equiv 0 ~ (mod ~4n)
\end{array}\right.$$
and for $\alpha \in \overline{\Delta }$, we have $rh \equiv 0 ~(mod ~4n )$.

\end{lem}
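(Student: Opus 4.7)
My plan is to exploit two facts. First, by Lemma \ref{l1}, every $g\in G_\alpha$ either lies in $\langle a^r\rangle$ or has the form $ba^k$ for some $k$. Second, the off-diagonal shape of $\rho^h(ba^k)$ in (\ref{two dim rep}) gives $\varsigma_h(ba^k)=0$ for every $k$ (which is also the content of the $[b]$ and $[ba]$ columns of Table I). Together, these collapse the sum to
\[
\sum_{g\in G_\alpha}\varsigma_h(g)\;=\;\sum_{j=0}^{l-1}\varsigma_h(a^{rj}),
\]
where $l=|\langle a^r\rangle|=4n/\gcd(4n,r)$.

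Next I would compute the remaining sum directly from the character of $\rho^h$. With $\omega=e^{i\pi/(2n)}$ and $h$ even, a short manipulation gives $\omega^{(2n-1)h}=(-1)^h\omega^{-h}=\omega^{-h}$, hence $\varsigma_h(a^{rj})=\omega^{hrj}+\omega^{-hrj}$ for every $j$. Since $rl$ is a multiple of $4n$, we have $\omega^{\pm hrl}=1$, so each geometric sum $\sum_{j=0}^{l-1}\omega^{\pm hrj}$ evaluates to $l$ when $\omega^{\pm hr}=1$ and to $0$ otherwise. Because $\omega^{hr}=1\iff \omega^{-hr}=1\iff hr\equiv 0\pmod{4n}$, the total is $2l$ when this divisibility holds and $0$ otherwise, which is exactly the claimed dichotomy.

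The second assertion is then immediate: by definition $\overline{\Delta}=\Delta\cap\Omega$, so for $\alpha\in\overline{\Delta}$ we have $\sum_{g\in G_\alpha}\varsigma_h(g)\neq 0$, which by the first part forces $rh\equiv 0\pmod{4n}$. I do not expect any real obstacle; the only point requiring some care is the trace identity $\varsigma_h(a^{rj})=2\cos(hrj\pi/(2n))$ for \emph{every} $j$ rather than only the representatives tabulated in Table I, and this is exactly where the evenness of $h$ enters.
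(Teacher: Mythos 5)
Your proposal is correct and follows essentially the same route as the paper: kill the $ba^k$ terms because $\varsigma_h$ vanishes off $\langle a\rangle$, reduce to the sum of $\varsigma_h(a^{rj})$ over $\langle a^r\rangle$, and evaluate it as $2l$ or $0$ according to whether $rh\equiv 0 \pmod{4n}$, with the last claim following from the definition of $\overline{\Delta}$. Your only addition is to justify explicitly (via the evenness of $h$ and the geometric series) the identity $\varsigma_h(a^{rj})=2\cos(hrj\pi/(2n))$ for all exponents, which the paper takes for granted.
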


\begin{proof}
Since $G_\alpha$ is a subgroup of $G$, using Lemma \ref{l1} there is a natural number $r$, $0\leq r < 4n$ such that $G_\alpha = \langle a^r \rangle $  or $\langle a^r \rangle < G_\alpha$. Using Table I, $\varsigma$ vanishes outside $\langle a \rangle$, therefore
 $$
 \sum_{g\in G_\alpha}\varsigma (g)= \sum_{t=1}^{l}\varsigma (a^{tr})=2\sum_{t=1}^{l}\cos(\frac{trh \pi}{2n})= \left\{\begin{array}{ll} 2l,~  rh \equiv0 ~ (mod ~4n)  \\
0,~  rh \not \equiv 0 ~~(mod ~4n).
\end{array}\right.
 $$
 Also if $rh \not \equiv 0 ~ (mod ~4n)$, then  $\sum_{g\in G_\alpha}\varsigma (g)= 0$ which shows $\alpha \notin \overline{\Delta }$.
\end{proof}

\begin{lem}
\label{l5}
Let $1 \leq h< 2n$ and $\nu_2$ is the $2$-adic valuation. Then there exist $t_1, t_2$,~ $0\leq t_1, t_2 <4n$
such that $\cos(\frac{(t_1-t_2)h\pi }{2n} )=0$ if and only if $\nu_2(\frac{h}{2n})< 0$.
\end{lem}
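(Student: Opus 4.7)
The plan is to recast the trigonometric condition as a divisibility problem and then analyze it through $2$-adic valuations.

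First, I would observe that $\cos(\frac{(t_1-t_2)h\pi}{2n}) = 0$ holds if and only if $\frac{(t_1-t_2)h}{2n}$ is a half-integer, i.e.\ there exists $k \in \mathbb{Z}$ with $(t_1-t_2)h = (2k+1)n$. Setting $s = t_1 - t_2$, the constraint $0 \leq t_1, t_2 < 4n$ allows us to realise any integer $s$ with $|s| < 4n$, so the question reduces to: does there exist an integer $s$ with $|s| < 4n$ such that $sh/n$ is an \emph{odd} integer?

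Next, I would introduce $d = \gcd(h,n)$ and write $h = d h_1$, $n = d n_1$ with $\gcd(h_1, n_1) = 1$. Then $sh/n = s h_1/n_1$ is an integer iff $n_1 \mid s$, and writing $s = t n_1$ gives $sh/n = t h_1$. Thus $sh/n$ is an odd integer iff both $t$ and $h_1$ are odd. The key step is to translate the condition "$h_1$ is odd" into a statement about $\nu_2$: since
\begin{equation*}
\nu_2(h_1) = \nu_2(h) - \nu_2(\gcd(h,n)) = \nu_2(h) - \min(\nu_2(h), \nu_2(n)),
\end{equation*}
$h_1$ is odd iff $\nu_2(h) \leq \nu_2(n)$, which is exactly $\nu_2(h/2n) \leq -1$, i.e.\ $\nu_2(h/2n) < 0$.

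Finally I would close both directions explicitly. For the "if" direction, when $\nu_2(h/2n) < 0$ take $t = 1$, so $s = n_1 \leq n < 4n$; then $t_1 = n_1$, $t_2 = 0$ satisfies the required cosine equation. For the "only if" direction, if $\nu_2(h/2n) \geq 0$ then $h_1$ is even, so $t h_1$ is even for every integer $t$ and no admissible $s$ can make $sh/n$ an odd integer. I do not anticipate a serious obstacle; the only point requiring care is keeping the bookkeeping of $\nu_2(h)$, $\nu_2(n)$, and $\nu_2(2n) = \nu_2(n) + 1$ aligned when converting between the two forms of the condition.
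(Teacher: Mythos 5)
Your proof is correct. A point of comparison worth flagging: the paper states Lemma \ref{l5} with no proof at all, so your argument is not a variant of the paper's proof but a complete substitute for a missing one. Your chain of reductions is sound: $\cos(\frac{(t_1-t_2)h\pi}{2n})=0$ iff $(t_1-t_2)h=(2k+1)n$ for some integer $k$; the range $0\leq t_1,t_2<4n$ realises every difference $s$ with $|s|<4n$; writing $h=dh_1$, $n=dn_1$ with $d=\gcd(h,n)$ reduces everything to whether $h_1$ is odd; and $\nu_2(h_1)=\nu_2(h)-\min(\nu_2(h),\nu_2(n))=0$ is exactly equivalent to $\nu_2(h)\leq\nu_2(n)$, i.e.\ $\nu_2(\frac{h}{2n})<0$. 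This last translation agrees with the paper's own Remark preceding Lemma \ref{l1}, which glosses the condition $\nu_2(\frac{h}{2n})<0$ as ``every power of $2$ that divides $h$ also divides $n$.'' Your explicit witness $t_1=n_1$, $t_2=0$ for the ``if'' direction lies comfortably in range since $n_1\leq n<4n$, and the ``only if'' direction is airtight because $th_1$ is even for every $t$ once $h_1$ is even (and non-integral values of $sh/n$ are irrelevant). The only cosmetic remark is that $s=0$ never yields a solution, so no degenerate case needs excluding; otherwise there is nothing to add.
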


\begin{thm}\label{main thm1}
\label{t1}
Let $G =SD_{8n}$ be a subgroup of $S_{4n}$, denote $\varsigma=\varsigma_{h}$ for $h \in C^{\dag}_{even}$, and assume $d= \dim V\geq 2 $. Then $V_{\varsigma}(G)$ has an orthogonal $\ast$-basis  if and only $\nu_2(\frac{h}{2n})< 0$ .
\end{thm}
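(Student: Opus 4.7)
The plan is to use the orthogonal decomposition $V_{\varsigma}(G) = \bigoplus_{\alpha \in \overline{\Delta}} V_{\alpha}^{*}$ and show that each orbital subspace $V_{\alpha}^{*}$ admits an orthogonal $\ast$-basis under the stated condition. For $\alpha \in \overline{\Delta}$, Lemmas \ref{l1} and \ref{l2} give $G_{\alpha} \cap \langle a \rangle = \langle a^{r} \rangle$ with $rh \equiv 0 \pmod{4n}$ and $|G_{\alpha}| \in \{l, 2l\}$, where $l = 4n/\gcd(4n,r)$. Since $\varsigma$ vanishes on $b\langle a\rangle$, the dimension formula \eqref{important3} yields $\dim V_{\alpha}^{*} = 4l/|G_{\alpha}| \in \{2, 4\}$.

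For the sufficiency direction ($\nu_{2}(h/2n) < 0 \Rightarrow$ orthogonal basis exists), Lemma \ref{l5} provides $t$ with $\cos(th\pi/2n) = 0$. In the case $\dim V_{\alpha}^{*} = 4$ (so $G_{\alpha} = \langle a^{r}\rangle$), I propose $\{e_{\alpha}^{*}, e_{a^{t}\cdot\alpha}^{*}, e_{b\cdot\alpha}^{*}, e_{ba^{t}\cdot\alpha}^{*}\}$ and verify orthogonality via \eqref{important2}. The key simplification is that $rh \equiv 0 \pmod{4n}$ collapses $\sum_{k=0}^{l-1}\varsigma(a^{s+kr})$ to $2l\cos(sh\pi/2n)$; together with $h$ being even (so $\varsigma(a^{(2n-1)s}) = \varsigma(a^{s})$ by the relation $ba^{s}b = a^{(2n-1)s}$), every off-diagonal inner product reduces either to $\cos(th\pi/2n) = 0$ or to a sum of $\varsigma$-values lying in $b\langle a\rangle$ and hence vanishing. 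The case $\dim V_{\alpha}^{*} = 2$ is handled by an analogous but shorter construction producing a single orthogonal pair.

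For the necessity direction ($\nu_{2}(h/2n) \geq 0 \Rightarrow$ no orthogonal basis), I argue contrapositively. Since $d \geq 2$, a counting argument using Proposition \ref{c(g)} (comparing $\sum_{g \neq e} d^{c(g)}$ to $d^{4n}$) produces $\alpha \in \Gamma_{d}^{4n}$ with $G_{\alpha} = \{e\}$, so $\dim V_{\alpha}^{*} = 4$. Under $\nu_{2}(h/2n) \geq 0$, Lemma \ref{l5} forces $\varsigma(a^{s}) \neq 0$ for every $s \not\equiv 0 \pmod{4n}$. Orthogonality $\varsigma(\sigma_{j}\sigma_{i}^{-1}) = 0$ for $i \neq j$ therefore forces every $\sigma_{j}\sigma_{i}^{-1}$ into $b\langle a\rangle$. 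But $(b\langle a\rangle)(b\langle a\rangle)^{-1} \subseteq \langle a\rangle$, so three distinct $\sigma_{1}, \sigma_{2}, \sigma_{3}$ cannot pairwise satisfy this condition; at most two orthogonal decomposable tensors can be found in $V_{\alpha}^{*}$, falling short of $\dim V_{\alpha}^{*} = 4$.

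The principal obstacle is the sufficiency computation when $\dim V_{\alpha}^{*} = 2$: an extra element $ba^{s} \in G_{\alpha}$ contributes a second coset to $\sigma_{j}G_{\alpha}\sigma_{i}^{-1}$, so the inner product picks up a residual cosine term depending delicately on $s$ (and further on the parity of $s$, since $(ba^{s})^{2} = a^{2ns}$ has order $1$ or $2$ accordingly). Exhibiting the orthogonal pair then requires a refined application of Lemma \ref{l5}, together with a verification that the candidate $\sigma_{i}$'s always annihilate this residue; this compatibility ultimately reduces once again to the hypothesis $\nu_{2}(h/2n) < 0$ via the divisibility relations between $h$, $l$, and $\gcd(h, 2n)$.
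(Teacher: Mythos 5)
Your proposal follows essentially the same route as the paper's proof: the orthogonal orbital decomposition, Lemmas \ref{l1}, \ref{l2} and \ref{l5}, explicit evaluation of $\langle e^{*}_{\sigma_1.\alpha},e^{*}_{\sigma_2.\alpha}\rangle$ via (\ref{important2}), and, for necessity, a choice of $\alpha$ with trivial stabilizer followed by a coset obstruction. Two remarks. First, the ``principal obstacle'' you flag in the $\dim V^{*}_\alpha=2$ case is illusory. When $\langle a^{r}\rangle\lneqq G_\alpha$, the set $G_\alpha\cap b\langle a\rangle$ is a single coset $\langle a^{r}\rangle\, b a^{s}$ of $\langle a^{r}\rangle$, and for $\sigma_1=a^{t_1}$, $\sigma_2=a^{t_2}$ the set $\sigma_2\bigl(\langle a^{r}\rangle b a^{s}\bigr)\sigma_1^{-1}$ lies entirely inside $b\langle a\rangle$, on which $\varsigma_h$ vanishes identically (Tables I and II). Hence the extra coset contributes nothing: the inner product is exactly $\frac{l}{2n}\cos\bigl(\frac{(t_2-t_1)h\pi}{2n}\bigr)$, the same expression as in your $\dim V^{*}_\alpha=4$ computation, and Lemma \ref{l5} supplies the orthogonal pair with no further compatibility condition; no dependence on $s$, on its parity, or on the order of $ba^{s}$ ever enters. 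This is precisely how the paper handles its Case~1. Second, your necessity argument is, if anything, tighter than the paper's: the observation that orthogonality forces each $\sigma_j\sigma_i^{-1}$ into $b\langle a\rangle$ (because $\varsigma$ is nowhere zero on $\langle a\rangle\setminus\{1\}$ when $\nu_2(h/2n)\geq 0$) combined with $(b\langle a\rangle)(b\langle a\rangle)^{-1}\subseteq\langle a\rangle$ makes explicit the pigeonhole step that the paper leaves implicit after citing [17] for the existence of $\alpha$ with $G_\alpha\cap\langle a\rangle=\{1\}$. One small correction: $\varsigma(a^{(2n-1)s})=\varsigma(a^{s})$ simply because characters are class functions and $a^{(2n-1)s}=ba^{s}b^{-1}$; the parity of $h$ plays no role there.
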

\begin{proof}
It is enough to prove that for any $\alpha \in \overline{\Delta} $ the orbital subspace $V^{*}_\alpha$
has orthogonal $\ast$-basis  if $\nu_2(\frac{h}{2n})< 0$. Let $\nu_2(\frac{h}{2n})< 0$ and
assume $\alpha \in \overline{\Delta} $. By Lemma \ref{l1}, $G_\alpha = \langle a^r \rangle $  or $\langle a^r \rangle < G_\alpha$. Let $l=\frac{4n}{gcd(4n,r)}$. Now we consider two cases.\\

\textbf{Case 1}. If $\langle a^r \rangle < G_\alpha$, then by Lemma \ref{l1} we establish $|G_\alpha| \geq 2l$ where
$$\langle a^r \rangle= \langle a \rangle \cap G_\alpha = \{a^r,a^{2r},...,a^{lr}=1  \}.$$
By (\ref{important3}), $|G_\alpha| \geq 2l$ and Lemma \ref{l2}, we have
\begin{equation}
\nonumber
\dim V^{*}_\alpha =\frac{\varsigma(1)}{|G_\alpha|}\sum_{\sigma \in G_\alpha}\varsigma(\sigma)\leq \frac{2}{2l}(2l)=2.
\end{equation}
 If $\dim V^{*}_\alpha=1$, then it is obvious that we have an orthogonal $\ast$-basis. Let us consider $\dim V^{*}_\alpha=2$. Set $\sigma_1=a^j,\sigma_2=a^i $. Then
$$
\sigma_2G_\alpha{\sigma_1}^{-1}\cap \langle a \rangle= \{ a^{r+i-j},..., a^{lr+i-j}\}.
$$
Hence if $\sigma_1=a^j,\sigma_2=a^i$, by (\ref{important2}), we have

\begin{eqnarray*}
\langle e^{*}_{\sigma_1.\alpha},e^{*}_{\sigma_2.\alpha}\rangle &=&\frac{\varsigma(1)}{|G|}\sum_{x \in \sigma_2 G_\alpha \sigma^{-1}_1}\varsigma(x)= \frac{2}{8n}\sum_{t=1}^{l}\varsigma(a^{tr+i-j}) \\
&=& \frac{4}{8n}\sum_{t=1}^{l} \cos \frac{(tr+i-j)h \pi}{2n}\\
&=& \frac{1}{2n}\sum_{t=1}^{l} \cos ( \frac{trh \pi}{2n}+\frac{(i-j)h \pi}{2n}) \\
&=&\frac{1}{2n}\sum_{t=1}^{l} \cos ( \frac{(i-j)h \pi}{2n})= \frac{l}{2n}\cos ( \frac{(i-j)h \pi}{2n})\hspace{0.5cm}(3.1)
\end{eqnarray*}
where penultimate equality is due to application of  Lemma \ref{l2}. Using Lemma \ref{l5}, there exist $i$ and $j$ such that
$$
\langle e^{*}_{a^j.\alpha},e^{*}_{a^i.\alpha}\rangle=0
$$
which means that $\{ e^{*}_{\sigma_1.\alpha},e^{*}_{\sigma_2.\alpha}\} $ is an orthogonal $\ast$-basis for $V^{*}_\alpha$.\\

\textbf{Case 2.} If $G_\alpha = \langle a^r \rangle=\{ a^r,a^{2r},...,a^{lr}=1 \}$, then by (\ref{important3}) and Lemma \ref{l2},
\begin{equation}
\nonumber
\dim V^{*}_\alpha =\frac{\varsigma(1)}{|G_\alpha|}\sum_{\sigma \in G_\alpha}\varsigma(\sigma)= \frac{2}{l} (2l)=4.
\end{equation}
 For any $\sigma_1,\sigma_2 \in G$, we have
$$
\sigma_2 G_\alpha {\sigma_1~}^{-1}= \left\{\begin{array}{ll} \{ a^{r+i-j},a^{2r+i-j},...,a^{lr+i-j}\} , \hspace{4.0cm}\text{if} \hspace{0.7cm}\sigma_1=a^j,\sigma_2=a^i \\
\{ a^{r+i+j(1-2n)}b,a^{2r+i+j(1-2n)}b,...,a^{lr+i+j(1-2n)}b \} ,\hspace{0.7cm}\text{if} \hspace{0.7cm}\sigma_1=a^jb,\sigma_2=a^i \\
\{  a^{(1-2n)r+i-j},a^{2r(1-2n)+i-j},...,a^{lr(1-2n)+i-j} \} ,\hspace{1.2cm}\text{if} \hspace{0.8cm}\sigma_1=a^jb,\sigma_2=a^ib
\end{array}\right.
$$
If $\sigma_1=a^j,\sigma_2=a^i$, by (3.1) we have
$$\langle e^{*}_{\sigma_1.\alpha},e^{*}_{\sigma_2.\alpha}\rangle  = \frac{l}{2n}\cos ( \frac{(i-j)h \pi}{2n})$$
If $\sigma_1=a^jb,\sigma_2=a^i$, we have
 $$
 \langle e^{*}_{\sigma_1.\alpha},e^{*}_{\sigma_2.\alpha}\rangle =0
 $$
and for $\sigma_1=a^jb,\sigma_2=a^ib $, we have

\begin{eqnarray*}
\langle e^{*}_{\sigma_1.\alpha},e^{*}_{\sigma_2.\alpha}\rangle&=& \frac{\varsigma(1)}{|G|}\sum_{x \in \sigma_2 G_\gamma \sigma^{-1}_1}\varsigma(x)= \frac{2}{8n}\sum_{t=1}^{l}\varsigma(a^{tr(1-2n)+i-j})\\
&=&\frac{4}{8n}\sum_{t=1}^{l} \cos \frac{(tr(1-2n)+i-j)h \pi}{2n}\\
&=& \frac{1}{2n}\sum_{t=1}^{l} \cos ( \frac{trh \pi}{2n}+\frac{(i-j)h \pi}{2n}-trh\pi) \\
&=&\frac{1}{2n}\sum_{t=1}^{l} \cos ( \frac{(i-j)h \pi}{2n})= \frac{l}{2n}\cos ( \frac{(i-j)h \pi}{2n})
\end{eqnarray*}
where penultimate equality is due to application of  Lemma \ref{l2}. Therefore
 $$
 \langle e^{*}_{\sigma_1.\alpha},e^{*}_{\sigma_2.\alpha}\rangle= \left\{\begin{array}{ll} \frac{l}{2n}\cos ( \frac{(i-j)h \pi}{2n}),\hspace{0.5cm} \sigma_1=a^j,\sigma_2=a^i \\
0,  \hspace{2.7cm} \sigma_1=a^jb,\sigma_2=a^i \\
 \frac{l}{2n}\cos ( \frac{(i-j)h \pi}{2n}),\hspace{0.5cm}\sigma_1=a^jb,\sigma_2=a^ib
\end{array}\right.
$$
Applying Lemma \ref{l5}, if $\nu_2(\frac{h}{2n})< 0$, there exist $t_1, t_2$,~ $0\leq t_1, t_2 <4n$
such that $\cos(\frac{(t_1-t_2)h\pi }{2n} )=0$. Put
$$
S= \{ a^{t_1}.\alpha,a^{t_2}.\alpha,a^{t_1}b.\alpha,a^{t_2}b.\alpha \} \subseteq \Gamma^{m}_n.
$$
Then for every $\alpha,\beta \in S$ and $\alpha \neq \beta$ we have
$$
\langle e^{*}_{\alpha},e^{*}_{\beta}\rangle=0
$$
But  $\dim V^{*}_\alpha = 4$; hence $\{e^{*}_{\xi} | \xi \in S \} $ is an orthogonal $\ast$-basis for $V^{*}_\alpha$.\\

Conversely, assume that $V_{\varsigma}(G)$ has an orthogonal basis of decomposable symmetrized tensors. Then since $V_\varsigma(G)= \bigoplus_{\alpha \in \overline{\Delta}} V^{*}_\alpha$ for all $\alpha \in \overline{\Delta}$, the orbital subspace $V^{*}_\alpha$ has an orthogonal basis of decomposable symmetrized tensors. Using [17, p. 642], we can choose  $\alpha \in \Gamma^{m}_n$ such that $a^t \notin G_\alpha$ for $1 \leq t <4n$.  Thus for such $\alpha$ we have $G_{\alpha}=\{1\}$ or $G_{\alpha}=\{1,a^{t}b,a^{-(2n-1)t}b\}$  for some $1 \leq t <4n$ since if  $G_{\alpha}\neq \{1\}$ and $a^{t_1} b,a^{t_2} b \in G_\alpha$, then
 $$
  a^{t_1} b.a^{t_2} b=a^{t_1}b.ba^{(2n-1)t_2} = a^{t_1+(2n-1)t_2}\in G_\alpha
 $$
 which shows that $t_1= -(2n-1)t_2$. To prove that  $\nu_2(\frac{h}{2n})< 0$ is a necessary condition for existence of orthogonal $\ast$-basis for $V_{\varsigma}(G)$, it is enough to consider both cases $G_{\alpha}= \{1\}$ and $G_{\alpha}=\{1,a^{t}b,a^{-(2n-1)t}b\}$. For both cases, we have
$$
\| e^{*}_\alpha\|^2= \frac{\varsigma(1)}{|G|}\sum_{g \in  G_\alpha}\varsigma(g)=\frac{2}{8n}=\frac{1}{4n}\neq 0,
$$
 so $\alpha \in \overline{\Delta}$.  First consider $G_{\alpha}= \{1\}$. For any $\sigma_1,\sigma_2 \in G$, we have
$$
\sigma_2 G_\alpha {\sigma_1~}^{-1}= \left\{\begin{array}{ll} \{ a^{i-j} \} ,\hspace{2.1cm}\text{if} \hspace{0.8cm}\sigma_1=a^j,\sigma_2=a^i \\
\{ a^{i+j(1-2n)}b \} ,\hspace{1.0cm}\text{if} \hspace{0.8cm}\sigma_1=a^jb,\sigma_2=a^i \\
\{  a^{(1-2n)i-j} \} ,\hspace{1.2cm}\text{if} \hspace{0.8cm}\sigma_1=a^jb,\sigma_2=a^ib
\end{array}\right.
$$
Therefore by (\ref{important2}) we have

$$
 \langle e^{*}_{\sigma_1.\alpha},e^{*}_{\sigma_2.\alpha}\rangle = \left\{\begin{array}{ll} \frac{1}{2n}\cos ( \frac{(i-j)h \pi}{2n}) ,\hspace{0.7cm}\text{if} \hspace{0.8cm}\sigma_1=a^j,\sigma_2=a^i \\
0 ,\hspace{3.0cm}\text{if} \hspace{0.8cm}\sigma_1=a^jb,\sigma_2=a^i \\
 \frac{1}{2n}\cos ( \frac{(i-j)h \pi}{2n}) ,\hspace{0.7cm}\text{if} \hspace{0.8cm} \sigma_1=a^jb,\sigma_2=a^ib
\end{array}\right.
$$

Hence  $\langle e^{*}_{\sigma_1.\alpha},e^{*}_{\sigma_2.\alpha}\rangle =0$ implies that there exist $ t_1$ and $t_2$ such that $\cos ( \frac{(t_1-t_2)h \pi}{2n})=0$, therefore by Lemma \ref{l5} we get $\nu_2(\frac{h}{2n})< 0$. Now consider $G_{\alpha}=\{1,a^{t}b,a^{-(2n-1)t}b\}$. For any $\sigma_1,\sigma_2 \in G$, we have
$$
\sigma_2 G_\alpha {\sigma_1~}^{-1}= \left\{\begin{array}{ll} \{ a^{i-j},   ba^{(2n-1)(j+t)-i}  ,   ba^{(2n-1)(j-(2n-1)t)-i}   \} ,\hspace{2.1cm}\text{if} \hspace{0.8cm}\sigma_1=a^j,\sigma_2=a^i \\
\{ a^{i+j(1-2n)}b, a^{j+(2n-1)t+i} , a^{j-t+i}   \} ,\hspace{3.7cm}\text{if} \hspace{0.8cm}\sigma_1=a^jb,\sigma_2=a^i \\
\{  a^{(1-2n)i-j},  a^{j+(2n-1)t+i}b,  a^{j-t+i}b                                          \} ,\hspace{3.6cm}\text{if} \hspace{0.8cm}\sigma_1=a^jb,\sigma_2=a^ib
\end{array}\right.
$$
Now similar to our previous calculations in this section, we get $\nu_2(\frac{h}{2n})< 0$.
\end{proof}
\begin{rem} In the proof of the necessary condition part of Theorem \ref{t1}, one can choose $\alpha=(1,2,2,..,2)$. The proof given here shows the stronger statement that the orbital subspace $V^{*}_\alpha$ looks an orthogonal $\ast$-basis whenever $G_{\alpha} \cup \langle a \rangle= \{1\}$.
\end{rem}
\begin{cor}\label{cor odd} Let $G =SD_{8n}$, $n$ is odd, be a subgroup of $S_{4n}$, denote $\varsigma=\varsigma_{h'}$ for $h' \in C^{\dag}_{even}$, and assume $d= \dim V\geq 2 $. Then $V_{\varsigma}(G)$ does not have an orthogonal $\ast$-basis.
\end{cor}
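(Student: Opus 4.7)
The strategy is to derive Corollary \ref{cor odd} as an immediate specialization of Theorem \ref{main thm1}, by checking that the necessary and sufficient arithmetic condition $\nu_{2}(h'/(2n))<0$ is automatically violated when $n$ is odd and $h'\in C^{\dag}_{even}$. In other words, all the substantive work has already been done in Theorem \ref{main thm1}; the plan is to verify that the parity hypotheses rule out its sufficient-condition case and then to invoke the ``only if'' direction.

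First I would unwind the definition of the indexing set. By definition, $C^{\dag}_{even}=C_{1}\setminus\{0,2n\}=\{2,4,\ldots,2n-2\}$, so every admissible $h'$ is a strictly positive even integer. In particular, $\nu_{2}(h')\geq 1$.

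Next I would use the hypothesis that $n$ is odd. This gives $\nu_{2}(2n)=1$, and therefore
$\nu_{2}(h'/(2n))=\nu_{2}(h')-\nu_{2}(2n)\geq 1-1=0.$
Equivalently, in the reformulation provided by the Remark preceding Theorem \ref{main thm1}, the prime $2$ divides $h'$ but does not divide $n$, so the condition ``every power of $2$ dividing $h'$ also divides $n$'' fails. Hence $\nu_{2}(h'/(2n))<0$ cannot hold.

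Finally, since the biconditional in Theorem \ref{main thm1} gives existence of an orthogonal $\ast$-basis if and only if $\nu_{2}(h'/(2n))<0$, the contrapositive of the ``only if'' half yields that $V_{\varsigma}(G)$ does not possess an orthogonal $\ast$-basis, completing the proof. There is no real obstacle here beyond tracing the parity arithmetic; the only care needed is to confirm that the sets $C^{\dag}_{even}$ and $\{h:\nu_{2}(h/(2n))<0\}$ have empty intersection when $n$ is odd, which is exactly the elementary calculation above.
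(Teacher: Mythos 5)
Your proposal is correct and follows exactly the paper's own argument: the paper likewise observes that for odd $n$ one has $\nu_2(h'/(2n))\geq 0$ and then invokes the ``only if'' direction of Theorem \ref{main thm1}. You merely spell out the elementary parity computation ($h'\in C^{\dag}_{even}$ is even, so $\nu_2(h')\geq 1=\nu_2(2n)$) that the paper leaves implicit.
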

\begin{proof}
 Since $n$ is odd then $\nu_2(\frac{h'}{2n})\geq 0$. Thus using Theorem 3.4 $V_{\varsigma}(G)$ does not have an orthogonal $\ast$-basis.
\end{proof}

\begin{thm}\label{thm odd}
\label{t2}
Let $G =SD_{8n}$, be a subgroup of $S_{4n}$, denote $\psi=\psi_{h'}$ for $h' \in C^{\dag}_{odd}$ (even $n$) or $h' \in C^{odd}_{2,3}\setminus \{n,3n \}$(odd n) , and assume $d= \dim V\geq 2 $. Then $V_{\psi}(G)$ does not have orthogonal $\ast$-basis.
\end{thm}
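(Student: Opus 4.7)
The plan is to mirror the converse direction of Theorem \ref{t1} by exhibiting a single orbital subspace $V^{*}_\alpha$ that fails to have an orthogonal $\ast$-basis; since $V_{\psi}(G) = \bigoplus_{\alpha \in \overline{\Delta}} V^{*}_\alpha$ is an orthogonal direct sum, this suffices. First I would choose $\alpha \in \Gamma^{d}_{4n}$ with $G_\alpha = \{1\}$, which is possible for $d \geq 2$ by the argument of [17, p.\ 642] invoked at the end of the proof of Theorem \ref{t1}. Since $\sum_{g\in G_\alpha}\psi(g) = \psi(1) = 2 \neq 0$, we have $\alpha \in \overline{\Delta}$, and (\ref{important3}) gives $\dim V^{*}_\alpha = \psi(1)^{2}/|G_\alpha| = 4$.

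Assume for contradiction that $V^{*}_\alpha$ has an orthogonal $\ast$-basis $\{e^{*}_{\sigma_k\cdot\alpha}\}_{k=1}^{4}$, and partition the $\sigma_k$ into $S_a \subseteq \langle a\rangle$ and $S_b \subseteq b\langle a\rangle$. Using Lemma \ref{formula} and the identity $(2n-1)^{2}\equiv 1 \pmod{4n}$, I would check that $\sigma_2 G_\alpha \sigma_1^{-1}$ is a single reflection element $a^{k}b$ whenever $\sigma_1,\sigma_2$ lie on opposite sides, so Table I (respectively, Table II) forces the cross inner products between $S_a$ and $S_b$ to vanish automatically; and when $\sigma_1,\sigma_2$ are both in $\langle a\rangle$ (or both in $b\langle a\rangle$), (\ref{important2}) reduces the inner product to $\tfrac{1}{4n}\psi(a^{i-j})$. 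Orthogonality within each side thus requires $\psi(a^{i_r-i_s})=0$ for every pair of exponents.

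The key step is the analysis of the zero set of $\psi(a^{m})$ for $m$ in the nonzero residues modulo $4n$: for $m$ odd, $\psi(a^{m}) = 2i\sin(mh'\pi/(2n))$ is nonzero because $mh'$ is odd while $2n$ is even; for $m$ even, $\psi(a^{m}) = 2\cos(mh'\pi/(2n))$ vanishes precisely when $mh'/n$ is an odd integer. Consequently every pair of elements of $S_a$ must differ by an even integer (forcing a common parity), and any three $i_1<i_2<i_3$ in $S_a$ would require the three pairwise differences $i_2-i_1$, $i_3-i_2$, $i_3-i_1$ to satisfy the odd-integer condition simultaneously, which is impossible because the third difference is the sum of the first two (sum of two odd integers is even). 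This forces $|S_a|\le 2$ and similarly $|S_b|\le 2$.

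For odd $n$ the contradiction is then immediate: setting $d=\gcd(h',n)$, the cosine-vanishing condition forces any even solution $m$ to have the form $(n/d)M_0$ with $M_0$ odd, but with $n$ and $d$ both odd $n/d$ is odd, making $m$ itself odd and contradicting $m$ even; hence $|S_a|\le 1$, $|S_b|\le 1$, and $|S_a|+|S_b|\le 2<4 = \dim V^{*}_\alpha$. The even $n$ case is the main technical obstacle: here $m=n/\gcd(h',n)$ is genuinely an even valid solution, so the counting bound $|S_a|,|S_b|\le 2$ no longer immediately contradicts $\dim V^{*}_\alpha = 4$, and an additional reduction is required, paralleling Case 2 of the converse in Theorem \ref{t1}, by passing to a different $\alpha$ with $G_\alpha = \{1, a^{t}b\}$ (an even $t$) and combining the vanishing constraints on $\psi(a^{i})$ arising from both $\sigma_2 G_\alpha \sigma_1^{-1}$-cosets to rule out every possible configuration of the four basis elements.
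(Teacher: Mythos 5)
Your treatment of the odd-$n$ case is complete and correct, and is in fact more careful than the paper's own proof, which consists essentially of the single assertion that ``$i\sin x$ and $\cos x$ cannot vanish simultaneously.'' You correctly reduce to an orbital subspace with $G_\alpha=\{1\}$ and $\dim V^{*}_{\alpha}=4$, note that the cross inner products between $\langle a\rangle\cdot\alpha$ and $b\langle a\rangle\cdot\alpha$ vanish while the within-side ones equal $\tfrac{1}{4n}\psi(a^{i-j})$, and then show (via the parity analysis of $mh'\equiv n \pmod{2n}$) that for odd $n$ no nonzero difference makes $\psi(a^{m})=0$, so at most $2<4$ pairwise orthogonal tensors exist. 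The genuine gap is the even-$n$ case, which you have located exactly but not closed, and the ``additional reduction'' you propose cannot work. For $n$ even and $h'$ odd the congruence $mh'\equiv n\pmod{2n}$ always has a solution $m_{0}$ with $0<m_{0}<4n$ (since $\gcd(h',2n)=\gcd(h',n)$ divides $n$), and every solution is automatically even ($mh'\equiv n$ is even, $h'$ is odd). Then $\psi(a^{m_{0}})=2\cos(\pi/2+k\pi)=0$, and for $\alpha$ with $G_{\alpha}=\{1\}$ the four tensors $e^{*}_{\alpha},e^{*}_{a^{m_{0}}\alpha},e^{*}_{b\alpha},e^{*}_{a^{m_{0}}b\alpha}$ are pairwise orthogonal, nonzero and distinct, hence an orthogonal $\ast$-basis of $V^{*}_{\alpha}$: the configuration $|S_a|=|S_b|=2$ that you flag as the obstacle really occurs. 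Passing to $\alpha$ with $G_{\alpha}=\{1,a^{t}b\}$ does not help either: there $\dim V^{*}_{\alpha}=2$ and $\langle e^{*}_{\alpha},e^{*}_{a^{m_{0}}\alpha}\rangle=\tfrac{1}{4n}\bigl(\psi(a^{m_{0}})+\psi(a^{m_{0}}a^{t}b)\bigr)=0$, so that orbital subspace also has an orthogonal $\ast$-basis.

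More is true: for every $\alpha\in\overline{\Delta}$ one has $G_{\alpha}\cap\langle a\rangle=\langle a^{r}\rangle$ with $r$ even and $rh'\equiv 0\pmod{4n}$ (otherwise $\sum_{g\in G_{\alpha}}\psi(g)=0$), so each coset sum in (\ref{important2}) is purely of cosine type or purely of $i\sin$ type; the ``simultaneous vanishing'' obstruction invoked by the paper never arises, and for even $n$ the cosine sums genuinely vanish for suitable exponent differences. For example, for $n=2$, $h'=1$ the only stabilizers occurring in $\overline{\Delta}$ are $\{1\}$ and $\{1,a^{t}b\}$ with $t$ even, and by the computation above every nonzero orbital subspace of $V_{\psi_{1}}(SD_{16})$ admits an orthogonal $\ast$-basis. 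So the even-$n$ half of the statement is not merely unproved by your argument (and by the paper's): it appears to be false, and no choice of $\alpha$ will rescue it. You should prove the theorem only for odd $n$, and for even $n$ either exhibit the explicit orthogonal $\ast$-basis just described or flag the claim as erroneous.
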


\begin{proof} The proof is similar to the proof of Theorem \ref{t1}. Using Table I and Table
II we conclude that  $\langle e^{*}_{\sigma_1.\alpha},e^{*}_{\sigma_2.\alpha}\rangle \neq 0$ since the imaginary and real parts should both be equal to zero; but $i\sin x$ and $\cos x$ can not vanish simultaneously.
\end{proof}

\section{ACKNOWLEDGMENTS}
The authors are grateful to Professor Hjalmar Rosengren for valuable comments and for reviewing earlier drafts very carefully.


\bigskip
\bigskip

\address Mahdi Hormozi \\
{ Department of Mathematical Sciences, Division of Mathematics,\\ Chalmers University
of Technology and University of Gothenburg,\\ Gothenburg 41296, Sweden}\\
\email{hormozi@chalmers.se}\\

\address Kijti Rodtes\\
        {Department of  Mathematics\\
         Faculty of Science, Naresuan University\\
        Phitsanulok 65000, Thailand}\\
\email{kijtir@nu.ac.th}

\end{document}